\documentclass[12pt]{article}
\usepackage[symbol]{footmisc}
\usepackage[misc]{ifsym}
\usepackage{mathtools}
\usepackage{amsmath}
\usepackage{amssymb}
\usepackage{amsthm}
\usepackage{mathrsfs}
\usepackage{hyperref}
\usepackage{graphicx}
\usepackage{caption}
\usepackage{subcaption}
\usepackage{xcolor}
\usepackage{listings}
 \usepackage[english]{babel}
\usepackage{bm}
\usepackage{upgreek}
\usepackage{bbold}
\usepackage{hyperref}

\numberwithin{equation}{section}
\begin{document}

\noindent {Asymptotics of multifractal products of spherical random fields\footnote[2]{Dedicated to the memory of Professor Vitalii Pavlovych Motornyi, corresponding member of National Academy of Sciences of Ukraine
 (1940-2025)}}
\vskip 3mm

\vskip 5mm
\noindent Illia Donhauzer\footnote[1]{ Institute of Mathematics for Industry, Kyushu University, Japan. La Trobe University, Melbourne, Australia.  emails: \href{donhauzer.illia.501@m.kyushu-u.ac.jp}{donhauzer.illia.501@m.kyushu-u.ac.jp}, \href{I.Donhauzer@latrobe.edu.au}{I.Donhauzer@latrobe.edu.au}}.

\vskip 3mm
\noindent Key Words: Multifractals, spherical random fields, random measures, limit theorems, R\'enyi function.
\vskip 3mm

\noindent ABSTRACT

The paper studies multifractal random measures on spheres $\mathbb{S}^d$ constructed using multifractal
products of random fields. The paper presents new limit theorems for multifractal
products of spherical fields and conditions for the non-degeneracy of the limiting measure.
We also focus on studying multifractal properties of the limiting measure and calculate its
R\'enyi function under conditions that are expressed in terms of second-order moments in the
geometric Gaussian scenario. Compared to earlier results, the obtained limit theorems hold under general mixing conditions that allow for
considering multifractal products of fields from a wide class and deriving random measures
with flexible multifractal properties.
\vskip 4mm

\renewcommand{\qedsymbol}{$\blacksquare$}

\theoremstyle{definition}
\newtheorem{definition}{Definition}

\theoremstyle{definition}
\newtheorem{assumption}{Assumption}

\theoremstyle{definition}
\newtheorem{lemma}{Lemma}

\theoremstyle{definition}
\newtheorem{theorem}{Theorem}

\theoremstyle{definition}
\newtheorem{remark}{Remark}

\theoremstyle{definition}
\newtheorem*{example}{Example}

\theoremstyle{definition}%
\newtheorem{corollary}{Corollary}%

\section{Introduction}
The scope of this article is the study of multifractal random measures on spherical domains. Multifractal random measures defined on spheres have found applications in the study of various phenomena. Among others, cosmic microwave background (CMB) radiation \cite{broadbridge2024stochastic, broadbridge2022multifractionality,  leonenko2021analysis},  distributions of galaxies in the universe \cite{martinez1990clustering}, and epicenters of earthquakes \cite{geilikman1990multifractal}. 

The concept of multifractality initially emerged in the context of physics. In contrast to monofractal systems, multifractals exhibit more complex local behaviour, which cannot be quantified by a single parameter. Across the literature, the definition of multifractality is framed through distinct mathematical formalisms: some authors define it via the path properties \cite{jaffard1999multifractal} (in the context of L\'evy processes), while others characterise it through the scaling behavior of moments or the properties of the R\'enyi function \cite{kahane1987, molchan1996}. For a discussion on the definitions of multifractality, see \cite{grahovac2018bounds}. In this paper, we characterise multifractal properties of random measures using the R\'enyi function.

The multifractal spectrum $D(h)$ provides the Hausdorff dimensions of the sets on which a signal exhibits a specific local Hurst exponent $h$, thereby quantifying its local singular behavior. The R\'enyi function is related to the multifractal spectrum via a Legendre transform \cite{jaerisch2020multifractal, seuret2017multifractal}. Because the R\'enyi function exhibits greater regularity with respect to the data and is often more tractable both analytically and numerically, it is a preferred tool for multifractal analysis \cite{riedi1995improved}.

Various approaches have been proposed for constructing multifractal random measures, including binomial cascading models \cite{kahane1987}, and Mandelbrot-type cascades \cite{molchan1996}, where, under simple conditions, the R\'enyi function and the multifractal spectrum were~derived. 

Increments of classical cascading measures form a non-stationary process. To overcome this issue, Mannersalo et al. \cite{Man} introduced a construction of random multifractal measures based on iterative multiplication (multifractal products) of stationary stochastic processes. Necessary and sufficient conditions for convergence in the space \(L_2\), and  the R\'enyi function of the limiting measure on the interval \([1,2]\) were obtained. Following the later approach, \cite{Anh27042009} studied multifractal products of stationary diffusion processes. Denisov and Leonenko \cite{denisov2016limit} extended these results by obtaining conditions for convergence in \(L_p,\  p \geq2,\) spaces, and evaluating the Rényi function on the interval \([0,p]\). Donhauzer and Olenko \cite{donhauzer2024limit} considered multifractal products of random fields and constructed multifractal random measures on hypercubes $[0,1]^d,\ d\geq1,$ deriving conditions for convergences in \(L_p,\  p \geq2,\) spaces, establishing rates of convergence, and calculating the Rényi function on the interval \([0,p]\). 

Motivated by the Planck mission’s CMB \cite{adam2016planck, ade2016planck} data, \cite{leonenko2021analysis} studied multifractal random measures on the sphere $\mathbb{S}^2$ and investigated important cases in which the R\'enyi function can be explicitly calculated. Their results demonstrated the applicability of the resulting multifractal spherical measures to cosmology and revealed intricate local scaling properties of CMB temperature fluctuations.

Our focus in this paper is to construct multifractal random measures on spheres $\mathbb{S}^{d}$. We derive multifractal random measures as limits of multifractal products of spherical random fields. Compared to earlier results, the key novelties of the paper are:
\begin{itemize}
\item deriving limit theorems for convergence of multifractal products of spherical fields in $L_p, \ p\geq2,$ spaces, which generalises existing results \cite{leonenko2013renyi} for $L_2$ convergence;
\item the obtained limit theorems hold under general mixing conditions extending the results in the literature where the exponential decay of covariances is commonly assumed \cite{leonenko2021analysis, leonenko2013renyi, Man};
\item calculating the R\'enyi function of the limiting measure;
\item demonstrating the obtained results in the geometric Gaussian scenario where the R\'enyi
function is calculated explicitly under simple conditions on second-order moments.
\end{itemize}
The paper is structured as follows: section \ref{sec1_mult} provides main definitions and results from the theory of spherical random fields, and introduces the model under study; section \ref{sec2_mult} derives limit theorems for multifractal products of spherical fields and conditions for the non-degeneracy of the limiting measure; section \ref{sec:mult} calculates the R\'enyi function of the obtained in section \ref{sec2_mult}  limiting measure.

For the convenience of the reader, we summarize the basic notation that we use
throughout the paper. Bold letters are used to denote vectors (e.g. $\bm{x}\in\mathbb{R}^{d+1}$), while numbers and scalar variables are denoted in a regular font. $\mathbb{N}_0$ denotes the set of non-negative integers $\{0,1,...\},$ $\mathbb{S}^d$, $d\geq 1$, denotes the centered unit hypersphere in $\mathbb{R}^{d+1}.$
The notation $(\eta, \boldsymbol{\theta}) = (\eta,\theta_1, \theta_2,..., \theta_{d}),\ \eta\geq0, \ \theta_i\in[0,\pi], \ i=\overline{1,d-1}, \ \theta_d\in[0,2\pi),$ is used to define points in the $d+1$-dimensional spherical system of coordinates, and the simplified notation  $\boldsymbol{\theta} = (\theta_1, \theta_2,..., \theta_{d}),$ is used for points on $\mathbb{S}^d,$ i.e. when $\eta=1.$ The north pole of $\mathbb{S}^d$ is denoted by $\mathbb{0}_d$ and has spherical coordinates $\mathbb{0}_d = (0,0,...,0).$ $||\cdot||$ and $d_{{\rm Euc}}(\cdot, \cdot)$ are the Euclidean norm and the Euclidean distance in $\mathbb{R}^{d+1}$ respectively, and $d_{\mathbb{S}^d}(\cdot, \cdot)$ denote the angular (great circle) distance on $\mathbb{S}^d.$ By \cite[Page~130]{MR0697386}, it holds
\begin{equation}\label{dist}d_{{\rm Euc}}(\bm{\theta}_1, \bm{\theta}_2) = 2\sin\left(\frac{d_{\mathbb{S}_d}(\bm{\theta}_1, \bm{\theta}_2)}{2}\right),\ \bm{\theta}_1, \bm{\theta}_2 \in \mathbb{S}^d.\end{equation} A spherical cap $B_u(\bm{\theta})$ on $\mathbb{S}^d$ with center $\bm{\theta} \in \mathbb{S}^d$ and angular radius $u \in [0, \pi]$ is defined as $B_u(\bm{\theta}) := \{ \bm{\theta}_1 \in \mathbb{S}^d : d_{\mathbb{S}^d}(\bm{\theta}_1, \bm{\theta}) \leq u \}.$ The Borel $\sigma$-algebra on $\mathbb{S}^d$ is denoted by $\mathcal{B}(S^d).$ The Lebesgue measure  on $\mathbb{S}^d$ is denoted by $\sigma(\cdot).$ $C$ with subscripts represents finite positive constants, that are not necessarily the same in each~appearance.

\section{Preliminaries}
\label{sec1_mult} In this section we describe the model under study and state auxiliary results. Let us first state technical Lemmas.

\begin{lemma}\label{lemma_as} Let $B_u(\bm{\theta}), \ \bm{\theta}\in\mathbb{S}^d, $ be a spherical cap on $\mathbb{S}^d.$ Then, it holds
\[ \sigma(B_u(\bm{\theta})) \sim C u^d, \ {\rm when} \ u\to 0, \] for some constant $C.$
\end{lemma}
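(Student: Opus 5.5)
By rotation invariance of the Lebesgue measure $\sigma$ on $\mathbb{S}^d$, the value $\sigma(B_u(\bm{\theta}))$ does not depend on $\bm{\theta}$. We may therefore place the center at the north pole and compute $\sigma(B_u(\mathbb{0}_d))$ in the spherical coordinates $\bm{\theta}=(\theta_1,\dots,\theta_d)$ introduced above. In these coordinates the surface element of $\mathbb{S}^d$ is
\[
d\sigma = \sin^{d-1}\theta_1 \sin^{d-2}\theta_2\cdots \sin\theta_{d-1}\, d\theta_1\cdots d\theta_d .
\]
The angular distance from $\bm{\theta}$ to $\mathbb{0}_d$ is exactly $\theta_1$, so the cap is $B_u(\mathbb{0}_d)=\{\theta_1\le u\}$. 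The integral then factorizes:
\[
\sigma(B_u(\bm{\theta})) = \sigma_{d-1}\int_0^u \sin^{d-1} t\, dt ,
\]
where $\sigma_{d-1}=\sigma(\mathbb{S}^{d-1})=2\pi^{d/2}/\Gamma(d/2)$ collects the integrals over $\theta_2,\dots,\theta_d$. For $d=1$ we read this as $\sigma_0=2$, and the formula gives the arc length $2u$.

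Next we extract the asymptotics. Since $\sin t/t\to1$ as $t\to0$, for every $\varepsilon>0$ and all small $t$ we have
\[
(1-\varepsilon)t\le \sin t\le t .
\]
Integrating these bounds gives
\[
(1-\varepsilon)^{d-1}\frac{u^d}{d}\le \int_0^u\sin^{d-1}t\,dt\le \frac{u^d}{d}.
\]
An equivalent route is L'H\^opital's rule applied to $\int_0^u\sin^{d-1}t\,dt\,/\,u^d$. Either way,
\[
\sigma(B_u(\bm{\theta}))\sim C u^d, \qquad C=\frac{\sigma_{d-1}}{d}=\frac{\pi^{d/2}}{\Gamma(d/2+1)} .
\]
This $C$ is the volume of the unit ball in $\mathbb{R}^d$, as expected from local flatness.

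The only step needing care is justifying the volume element and the identification of the cap with $\{\theta_1\le u\}$. Both follow from the standard hyperspherical parametrization $x_1=\cos\theta_1$, $x_2=\sin\theta_1\cos\theta_2$, and so on.

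If one prefers to avoid coordinates, there is an alternative argument. Relation (\ref{dist}) shows that the cap is the intersection of $\mathbb{S}^d$ with a Euclidean ball of radius $2\sin(u/2)\sim u$. Locally the sphere is a graph over its tangent plane with Jacobian $1+O(u^2)$, which again gives the $u^d$ scaling. The coordinate computation is the more direct route, and it is the one I would present.
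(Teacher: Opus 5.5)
Your proof is correct. The paper omits the argument as trivial, and your coordinate computation is the natural one: it uses exactly the volume element $\sin^{d-1}\theta_1\sin^{d-2}\theta_2\cdots\sin\theta_{d-1}$ and the identity $d_{\mathbb{S}^d}(\bm{\theta},\mathbb{0}_d)=\theta_1$ that the paper itself relies on in the proof of Theorem~\ref{th2_mult}, and it additionally identifies $C=\pi^{d/2}/\Gamma(d/2+1)$; your separate treatment of $d=1$ is also right, since there the paper's convention gives $\theta_1\in[0,2\pi)$, the cap is two-sided, and this is where the factor $\sigma_0=2$ comes from.
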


The proof of the above Lemma is trivial.

\begin{lemma}\label{lemma_asymp}
Let $\bm{\theta}_1 =(\theta_1^{(1)},\theta_2^{(1)},...,\theta_d^{(1)}),$ and $ \bm{\theta}_2 = (\theta_1^{(2)}, \theta_2^{(2)},...,\theta_d^{(2)})$ be points on $\mathbb{S}^d$ such that $0\leq\theta_1^{(k)}\leq\frac{\pi}{2},\ k=1,2.$ Then, it holds
\[\frac{4 u}{\pi^2} d_{\mathbb{S}_d}(\bm{\theta}_1, \bm{\theta}_2) \leq d_{\mathbb{S}_d}((u\theta_1^{(1)},\theta_2^{(1)},...,\theta_d^{(1)}), (u\theta_1^{(2)},\theta_2^{(2)},...,\theta_d^{(2)}))\leq \frac{\pi^2 u}{4} d_{\mathbb{S}_d}(\bm{\theta}_1, \bm{\theta}_2)\] for all $u\in[0,1].$
\end{lemma}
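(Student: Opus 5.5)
The plan is to pass from angular distance to Euclidean (chord) distance via \eqref{dist}, compare the chord lengths of the original and the ``compressed'' points through an explicit coordinate formula, and then pass back to angular distance. Each passage costs a factor between $2/\pi$ and $1$ (or its inverse), and these factors combine to $4/\pi^2$ and $\pi^2/4$.

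\emph{Step 1 (chord formula).} Write a point of $\mathbb{S}^d$ as $\bm{x}=(\cos\theta_1,\ \sin\theta_1\,\bm{\omega})$, where $\bm{\omega}\in\mathbb{S}^{d-1}$ is determined by $(\theta_2,\dots,\theta_d)$. For $\bm{\theta}_1,\bm{\theta}_2$ with first angles $\alpha=\theta_1^{(1)}$ and $\beta=\theta_1^{(2)}$ and directions $\bm{\omega}_1,\bm{\omega}_2$, expand $2-2\langle\bm{x}_1,\bm{x}_2\rangle$ and use $\cos\alpha\cos\beta=\cos(\alpha-\beta)-\sin\alpha\sin\beta$. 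This gives
\[
d_{{\rm Euc}}^2(\bm{\theta}_1,\bm{\theta}_2)=4\sin^2\!\Big(\frac{\alpha-\beta}{2}\Big)+\sin\alpha\,\sin\beta\,\|\bm{\omega}_1-\bm{\omega}_2\|^2 .
\]
The scaled points keep the same $\bm{\omega}_1,\bm{\omega}_2$, so their squared chord is the same expression with $\alpha,\beta$ replaced by $u\alpha,u\beta$. Both terms are nonnegative, which is what allows a termwise comparison.

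\emph{Step 2 (termwise comparison).} Since $\alpha,\beta\in[0,\pi/2]$ and $u\in[0,1]$, every argument $\alpha$, $\beta$, $u\alpha$, $u\beta$, $|\alpha-\beta|/2$, $u|\alpha-\beta|/2$ lies in $[0,\pi/2]$. On this interval Jordan's inequality $\frac{2}{\pi}t\le\sin t\le t$ holds. It gives
\[
\frac{2u}{\pi}\sin t\;\le\;\frac{2}{\pi}ut\;\le\;\sin(ut)\;\le\; ut\;\le\;\frac{\pi u}{2}\sin t .
\]
Applying this to each sine factor (squared, or as the product $\sin(u\alpha)\sin(u\beta)$), each of the two terms is multiplied by a factor in $[\frac{4u^2}{\pi^2},\frac{\pi^2u^2}{4}]$. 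Taking square roots,
\[
\frac{2u}{\pi}\,d_{{\rm Euc}}(\bm{\theta}_1,\bm{\theta}_2)\le d_{{\rm Euc}}(\text{scaled points})\le\frac{\pi u}{2}\,d_{{\rm Euc}}(\bm{\theta}_1,\bm{\theta}_2).
\]

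\emph{Step 3 (back to geodesic distance).} By \eqref{dist} and Jordan's inequality applied at $d_{\mathbb{S}^d}/2\in[0,\pi/2]$, we have $\frac{2}{\pi}d_{\mathbb{S}^d}\le d_{{\rm Euc}}\le d_{\mathbb{S}^d}$ for any pair of points. Chaining the estimates gives the lower bound
\[
d_{\mathbb{S}^d}(\text{scaled})\ge d_{{\rm Euc}}(\text{scaled})\ge \frac{2u}{\pi}\,d_{{\rm Euc}}\ge\frac{4u}{\pi^2}\,d_{\mathbb{S}^d}.
\]
Symmetrically, it gives the upper bound
\[
d_{\mathbb{S}^d}(\text{scaled})\le\frac{\pi}{2}\,d_{{\rm Euc}}(\text{scaled})\le\frac{\pi^2u}{4}\,d_{\mathbb{S}^d},
\]
which is the statement of Lemma~\ref{lemma_asymp}. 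The case $u=0$ is trivial, since the scaled points coincide with the north pole $\mathbb{0}_d$ and both bounds reduce to $0\le 0\le 0$.

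The main obstacle is Step 1. One must derive the chord identity carefully in hyperspherical coordinates and check that the remaining angles enter only through $\bm{\omega}$, which is unchanged by the scaling. The hypothesis $\theta_1^{(k)}\le\pi/2$ is exactly what keeps every sine argument in $[0,\pi/2]$, where Jordan's inequality applies. Once the identity is in place, the rest is bookkeeping of constants.
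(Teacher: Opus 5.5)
Your proof is correct and follows essentially the same route as the paper's Appendix proof. Both pass from angular to chord distance via \eqref{dist} and Jordan's inequality, expand the squared chord in hyperspherical coordinates, compare termwise using $\frac{2}{\pi}u\sin t\le\sin(ut)\le\frac{\pi}{2}u\sin t$, and then pass back, giving the constants $\frac{4u}{\pi^2}$ and $\frac{\pi^2u}{4}$. The only cosmetic difference is that you derive the one-level identity $4\sin^2(\frac{\alpha-\beta}{2})+\sin\alpha\sin\beta\,\|\bm{\omega}_1-\bm{\omega}_2\|^2$ directly, whereas the paper cites the fully expanded chord formula from Cheng--Xiao.
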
 

See the Appendix for the proof of the above Lemma.

Throughout the paper, all random variables and random fields are given on the fixed probability space $\big\{\Omega, \mathcal{F}, P\big\}.$  
\begin{assumption}
\label{assump_main_mult}
Let $ \Lambda(\bm{\theta}),\ \bm{\theta} \in \mathbb{S}^d,$ be a $\mathcal{B}(\mathbb{S}^d)\times\mathcal{F}$-measurable random field  such that $P(\Lambda(\bm{\theta})\geq0,\ \forall \bm{\theta}\in\mathbb{S}^d)=1,$ and $E\Lambda(\bm{\theta}) = 1,$ $E\Lambda^2(\bm{\theta})<+\infty, \ \bm{\theta}\in\mathbb{S}^d.$  
\end{assumption}

Let $\Lambda^{(i)}(\cdot), \ i\in \mathbb{N}_0,$ be a sequence of independent random fields satisfying Assumption \ref{assump_main_mult}. Consider the products 

$$\Lambda_n(\bm{\theta}) := \displaystyle\prod_{i=0}^{n} {\Lambda}^{(i)}(\bm{\theta}), $$ 
and the cumulative random measures 
\begin{equation}\label{meas}
\mu_n(B) := \int\limits_{B}\Lambda_n(\bm{\theta}) \sigma(d\bm{\theta}), 
\end{equation} where $B\in\mathcal{B}(\mathbb{S}^d)$ and $n\in \mathbb{N}_0.$

Let us show that for a fixed $B\in\mathcal{B}(\mathbb{S}^d)$  the sequence of random variables $\{\mu_n(B), \  n \in \mathbb{N}_0\}$ and $\sigma$-algebras $\mathcal{F}_n =\sigma\{\Lambda^{(0)}(\bm{\theta}),\Lambda^{(1)}(\bm{\theta}),...,\Lambda^{(n)}( \bm{\theta}),\ \bm{\theta}\in B\}, \ n\in \mathbb{N}_0,$ form a martingale. 	Indeed, by applying Tonelli's theorem and as $\Lambda^{(i)}(\cdot), \ i\in \mathbb{N}_0,$ are independent random fields, one gets for~$n\geq j$
\[E\left(\mu_n(B) |  \mathcal{F}_j \right) =  E\left(\int\limits_{B}\Lambda_n(\bm{\theta}) \sigma(d\bm{\theta}) \ \bigg| \  \mathcal{F}_j \right) = \int\limits_{B} E\left(\displaystyle\prod_{i=0}^{n} \Lambda^{(i)}(\bm{\theta}) \ \bigg| \ \mathcal{F}_j \right) \sigma(d\bm{\theta})\]
\[ = \int\limits_{B} \displaystyle\prod_{i=0}^{j} \Lambda^{(i)}(\bm{\theta}) \sigma(d\bm{\theta})  = \mu_j(B).\]  

Let $\mu(\cdot)$ be a random measure on $\mathbb{S}^d.$ The R\'enyi function \cite{leonenko2013renyi, Man} of $\mu(\cdot)$ is defined~as 

\[ \tau_{\mu}(q):=\liminf\limits_{l\to\infty} \frac{\log\left(\sum\limits_{\Delta_l^{(m)}\in \Delta_l} E \mu^q(\Delta_l^{(m)})\right)}{ \log |\Delta_l^{(0)}|},\ \ \ q\geq0, \]  where $\Delta_l$ denotes the mesh formed by $l$-level dyadic decomposition of the sphere $\mathbb{S}^d.$ Consult \cite{hytonen2012non, kaenmaki2012existence} for dyadic decompositions of metric spaces.

In the next section, we state limit theorems for cumulative random measures $\mu_n(\cdot)$ and derive conditions for the non-degeneracy of the limiting measure.

\section{Limit theorems for cumulated measures}
\label{sec2_mult}

Our first result provides conditions for the convergence of the cumulative random measures. Consider the mixing condition.

\begin{assumption}
\label{assump2_mult}
Let for some $k\geq2$ and a vector $\textit{\textbf{p}} = (p_1,p_2,...,p_k),\ p_j\geq0, \ j=\overline{1,k},$ there exist a non-increasing function $\rho(\cdot,\textit{\textbf{p}})$  and a constant $b>1$ such that
\begin{equation}\label{assump2_mult_ineq}\rho(b^i\min\limits_{m\neq h}d_{\mathbb{S}^d}(\bm{\theta}_m,\bm{\theta}_h),\textit{\textbf{p}}) \geq E\left(\prod_{j=1}^k (\Lambda^{(i)}(\bm{\theta}_j))^{p_j}\right),\end{equation} for all $i\in \mathbb{N}_0$  and $\bm{\theta}_j \in \mathbb{S}^d,\ j = \overline{1,k}.$
\end{assumption} Let us use the notation $\rho(\cdot, p),$ if Assumption \ref{assump2_mult} is satisfied for the vector $\textit{\textbf{p}} = (1,1,...,1)$ consisting of $p\geq2$ elements.

\begin{theorem}
\label{th2_mult}
Let random fields $\Lambda^{(i)}(\cdot),\ i\in \mathbb{N}_0,$ satisfy Assumption {\rm \ref{assump_main_mult}} and Assumption {\rm \ref{assump2_mult}} for $\ k\geq 2$ and the vector $\textit{\textbf{p}} = (p_1,p_2,...,p_k), \ p_j\geq1, \ j=\overline{1,k},$ such that $\sum_{j=1}^kp_j = p \geq2,$ 

\begin{equation}
\label{cond2.1_mult}
b>\rho^{\frac{1}{d}}(0, \textit{\textbf{p}} )
\end{equation} and

\begin{equation}
\label{cond2.2_mult}
\sum_{i=0}^\infty\ln\left(\rho(b^i, \textit{\textbf{p}})\right) < \infty.\end{equation}  If $\mathfrak{B} = \{B: \ B\in\mathcal{B}(\mathbb{S}^d) \}$ is a fixed finite or countable system of Borel sets, then, there exists a limiting measure $\mu(\cdot)$ such that  for all $\ B\in\mathfrak{B}$ it holds $\mu_n(B)\to \mu(B)$, when $\ n\to\infty,$ with probability $1$ and in the space $L_p.$
\end{theorem}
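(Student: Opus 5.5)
The plan is to use the martingale structure established in Section \ref{sec1_mult}. For a fixed $B\in\mathfrak{B}$, $\{\mu_n(B),\mathcal{F}_n\}$ is a non-negative martingale. By Doob's martingale convergence theorem, a martingale bounded in $L_p$ with $p\ge 2>1$ converges almost surely and in $L_p$. So it suffices to prove $\sup_n E\mu_n^p(B)<\infty$; by monotonicity of the integrand it is enough to do this for $B=\mathbb{S}^d$. Since $\mathfrak{B}$ is countable, the exceptional null sets can be combined, giving a single probability-one event on which $\mu_n(B)\to\mu(B)$ for all $B\in\mathfrak{B}$ simultaneously.

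Next I would turn the moment into an integral. Write the $p$-th moment (first for the case $p_j$ integers, with $p$ taken as the sum) as
\[
E\mu_n^p(\mathbb{S}^d)=\int_{(\mathbb{S}^d)^k}\prod_{i=0}^n E\Big(\prod_{j=1}^k(\Lambda^{(i)}(\bm\theta_j))^{p_j}\Big)\,\sigma(d\bm\theta_1)\cdots\sigma(d\bm\theta_k).
\]
This uses independence of the $\Lambda^{(i)}$ and Tonelli's theorem. For non-integer $p_j\ge1$ I would handle the grouping via H\"older/Jensen, so that the moment reduces to an integral of $E\prod_j\Lambda_n(\bm\theta_j)^{p_j}$ of exactly this form. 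By Assumption \ref{assump2_mult}, the integrand is bounded by $\prod_{i=0}^n\rho(b^i r,\bm p)$, where $r=\min_{m\ne h}d_{\mathbb{S}^d}(\bm\theta_m,\bm\theta_h)$. It is also at most $\rho(0,\bm p)^{n+1}$ because $\rho$ is non-increasing.

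I would then split the configuration space dyadically according to the size of $r$. Set $A_l=\{b^{-l-1}<r\le b^{-l}\}$ for $l\ge0$, plus the set $\{r>1\}$. On $A_l$, bound the factors with $i\le l$ by $\rho(0,\bm p)$ and the factors with $i>l$ by $\rho(b^{i-l-1},\bm p)$. The tail product is then at most $\exp\big(\sum_{i\ge0}\ln\rho(b^i,\bm p)\big)$ up to finitely many bounded terms, and this is finite by \eqref{cond2.2_mult}. Note that $\rho\ge E\Lambda^{p}\ge1$, so the logs are nonnegative and convergence of the series is exactly what is needed. For the measure of $A_l$: one of the $\binom{k}{2}$ pairs is within distance $b^{-l}$, so by Lemma \ref{lemma_as}, $\sigma^{\otimes k}(A_l)\le C b^{-ld}$. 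Hence
\[
E\mu_n^p(\mathbb{S}^d)\le C\sum_{l\ge0}\big(\rho(0,\bm p)\,b^{-d}\big)^{l}+C,
\]
uniformly in $n$. This geometric series converges precisely under \eqref{cond2.1_mult}.

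The main obstacle I expect is the reduction for general real $p_j\ge1$: passing from $E\mu_n^p$ to the multi-point product moments in Assumption \ref{assump2_mult}. This is immediate only when $p$ is an integer and $p_j$ count multiplicities. For the general case I would first try writing $\mu_n^p=\mu_n^{p-k}\prod_j\mu_n$ and estimating via H\"older with the exponents $p_j$. Failing that, I would restrict to the integer case and interpolate the $L_p$ bound by Lyapunov's inequality. The near-diagonal estimate on $A_l$ also needs care, since only the minimal distance is controlled. Bounding by the single closest pair, as above, should suffice given \eqref{cond2.1_mult}.
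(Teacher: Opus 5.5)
Your skeleton matches the paper's: an $L_p$-bounded martingale converges a.s.\ and in $L_p$, you reduce to $B=\mathbb{S}^d$, and you merge countably many null sets. Where you differ is in bounding the $k$-fold integral of $\prod_{i=0}^n\rho(b^i r,\bm{p})$, with $r=\min_{m\neq h}d_{\mathbb{S}^d}(\bm\theta_m,\bm\theta_h)$.

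The paper's route:
\begin{itemize}
\item It dominates $\rho(b^i r,\bm{p})$ by $\sum_{m\neq h}\rho(b^i d_{\mathbb{S}^d}(\bm\theta_m,\bm\theta_h),\bm{p})$, which reduces the problem to a double integral.
\item It uses rotation invariance of $\sigma$ to pass to a one-dimensional integral in the polar angle $\theta_1$.
\item It compares $\sum_i\ln\rho(b^i\theta_1,\bm{p})$ with $\int\ln\rho(b^x\theta_1,\bm{p})\,dx$.
\item It splits into cases according to whether $\rho(b^u,\bm{p})$ equals $1$ somewhere, and ends with integrability of $\theta_1^{d-1-\log_b\rho(0,\bm{p})}$ near $0$.
\end{itemize}
Your $b$-adic shells $A_l$ replace all of this with the geometric series $\sum_l(\rho(0,\bm{p})b^{-d})^l$. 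This is the discrete counterpart of the same integral and converges under exactly \eqref{cond2.1_mult}. It needs only the cap-volume bound of Lemma \ref{lemma_as}, no sum--integral comparison, and no case split. Note that the paper's two cases do not literally cover a $\rho$ that jumps across $1$ without attaining it; your version avoids that issue.

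One justification needs correcting. The inequality $\rho(x,\bm{p})\ge1$ is guaranteed only at $x=0$, by coincident points and Jensen. For $x>0$, $\rho$ dominates joint moments at distinct points, and these can fall below $1$ under negative correlation. The uniform bound on $\prod_{j=0}^N\rho(b^j,\bm{p})$ still holds: $\ln\rho(b^j,\bm{p})$ is non-increasing in $j$, so once the terms turn negative they stay negative and the partial sums stay bounded above. You do need $\rho(0,\bm{p})\ge1$, which is true, for the shells with $l\ge n$.

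The step you call the main obstacle is the paper's first line, and it is not actually an obstacle. However, what you wrote about it is wrong.
\begin{itemize}
\item Your displayed identity fails for every $p_j>1$, integer or not, because $\bigl(\int_B\Lambda_n\,d\sigma\bigr)^{p_j}\neq\int_B\Lambda_n^{p_j}\,d\sigma$.
\item For integer $p_j$, expanding the powers gives a $p$-fold integral of $p$-point moments with unit exponents. Those are controlled by $\rho(\cdot,p)$ for the unit vector, not by the assumed $\rho(\cdot,\bm{p})$. So the integer case is not ``immediate'' under the stated hypotheses.
\item Lyapunov interpolation from an integer moment would need the hypotheses at an integer exponent $\ge p$, which are not assumed.
\end{itemize}
The correct step works for all real $p_j\ge1$ at once: apply Jensen with respect to the normalized measure $\sigma(\cdot)/\sigma(B)$,
\[
\mu_n^{p_j}(B)\le\sigma(B)^{p_j-1}\int_B\Lambda_n^{p_j}(\bm\theta_j)\,\sigma(d\bm\theta_j).
\]
Multiply over $j$, take expectations, and use Tonelli together with independence across $i$. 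This gives
\[
E\mu_n^p(B)\le\sigma(B)^{p-k}\int_{B^k}\prod_{i=0}^nE\Big(\prod_{j=1}^k(\Lambda^{(i)}(\bm\theta_j))^{p_j}\Big)\prod_{j=1}^k\sigma(d\bm\theta_j),
\]
which is exactly the quantity your shell argument bounds. Replace your ``$=$'' by this inequality and drop the fallback ideas, and the proof is complete.
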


\begin{proof} As for a fixed $B\in\mathfrak{B},$  the sequence $\{\mu_n(B), n \geq 1\},$ is a martingale, it converges with probability 1 and in the space $L_p$ if $\sup_{n}E\mu_{n}^p(B)<\infty.$

Let us find the uniform in $n$ upper bound for the following moment
\[E\mu^p_n(B) =  E \prod_{j=1}^k\left(\int\limits_{B}\Lambda_n(\bm{\theta}_j)\sigma(d\bm{\theta}_j)\right)^{p_j}.\] For $p_i\geq1, \ i=\overline{1,k},$ by applying H\"{o}lder's inequality, one gets \[E\mu^p_n(B) \leq C \cdot E \left(\prod_{j=1}^k\int\limits_{B}\big(\Lambda_n(\bm{\theta}_j)\big)^{p_j} \sigma(d\bm{\theta}_j)\right) = C \cdot E\left( \int\limits_{B^k}  \prod_{j=1}^k \big(\Lambda_n(\bm{\theta}_j)\big)^{p_j} \prod_{j=1}^k\sigma(d\bm{\theta}_j) \right)\]

\begin{equation}\label{rho_mult} 
= C\cdot E \left( \int\limits_{B^k}\displaystyle\prod_{i=0}^{n}\prod_{j=1}^k \bigg( \Lambda^{(i)}(\bm{\theta}_j) \bigg)^{p_j} \prod_{j=1}^k \sigma(d\bm{\theta}_j)\right) \leq C \int\limits_{B^k} \prod_{i= 0}^{n} \rho(b^i\min_{m \neq h}d_{\mathbb{S}^d}(\bm{\theta}_m,\bm{\theta}_h), \textit{\textbf{p}} ) \prod_{j=1}^k \sigma(d\bm{\theta}_j),
\end{equation}  where the last estimate was obtained by applying Tonelli's theorem and Assumption \ref{assump2_mult}. 

By Assumption \ref{assump_main_mult} the random fields $\Lambda^{(i)}(\cdot), \ i\in\mathbb{N}_0,$ are non-negative with probability 1. Thus, by \eqref{assump2_mult_ineq} the function $\rho(\cdot,\bm{p})$ is non-negative, and it holds

\begin{equation}\label{eq:rho_est} \rho(b^i\min_{m \neq h}d_{\mathbb{S}^d}(\bm{\theta}_m,\bm{\theta}_h), \textit{\textbf{p}} )  \leq \sum\limits_{\substack{m,h: m\neq h}}  \rho(b^id_{\mathbb{S}^d}(\bm{\theta}_m,\bm{\theta}_h), \textit{\textbf{p}} ).\end{equation} By applying \eqref{eq:rho_est} one estimates \eqref{rho_mult} as
\[ C \sum\limits_{\substack{m,h: m\neq h}} \int\limits_{B}\int\limits_{B} \prod_{i= 0}^{n} \rho(b^id_{\mathbb{S}^d}(\bm{\theta}_m,\bm{\theta}_h), \textit{\textbf{p}} ) \sigma(d\bm{\theta}_m) \sigma(d\bm{\theta}_h).\] The integrals in the above sum are identical. Thus, it equals
\begin{equation}\label{th1:eq_double_int} C \int\limits_{B} \int\limits_{B} \prod_{i= 0}^{n} \rho(b^id_{\mathbb{S}^d}(\bm{\theta}_1,\bm{\theta}_2), \textit{\textbf{p}} )\sigma(d\bm{\theta}_1) \sigma(d\bm{\theta}_2) \leq C \int\limits_{\mathbb{S^d}} \int\limits_{\mathbb{S^d}} \prod_{i= 0}^{n} \rho(b^id_{\mathbb{S}^d}(\bm{\theta}_1,\bm{\theta}_2), \textit{\textbf{p}} ) \sigma(d\bm{\theta}_1) \sigma(d\bm{\theta}_2).\end{equation} As the integrand on the right-hand side of \eqref{th1:eq_double_int} depends only on the distance $d_{\mathbb{S}^d}(\bm{\theta}_1,\bm{\theta}_2)$ and the inner integration is over $\mathbb{S}^d,$ one can see that the inner integral is constant for all $\bm{\theta}_2.$ Thus, \eqref{th1:eq_double_int} equals
\[ C \int\limits_{\mathbb{S^d}} \prod_{i= 0}^{n} \rho(b^id_{\mathbb{S}^d}(\bm{\theta}_1,\mathbb{0}_d), \textit{\textbf{p}}) \sigma(d\bm{\theta}_1).\]
As for $\bm{\theta}_1 = (\theta_1,\theta_2,...,\theta_d)$ it holds $d_{\mathbb{S}^d}(\bm{\theta}_1,\mathbb{0}_d) = \theta_1,$ the above equals to
\[ C \int\limits_{[0,\pi]^{d-1}\times[0,2\pi)} \prod_{i= 0}^{n} \rho\left(b^i\theta_1, \textit{\textbf{p}} \right) \sin^{d-1}\theta_1\sin^{d-2}\theta_2\cdot...\cdot\sin\theta_{d-1} \prod\limits_{j=1}^d d\theta_j \] \begin{equation}\label{int_prod} \leq C \int\limits_{0}^\pi \prod_{i= 0}^{n} \rho\left(b^i\theta_1, \textit{\textbf{p}} \right) \sin^{d-1}\theta_1 d\theta_1. \end{equation} Since $\rho(\cdot, \bm{p})$ is non-increasing and $b>1,$ the following product can be estimated as
\[ \prod_{i= 1}^{n} \rho\left(b^i\theta_1, \textit{\textbf{p}} \right) = e^{\sum\limits_{i=1}^{n} \ln\rho\left(b^i\theta_1, \textit{\textbf{p}} \right)}\leq e^{\int\limits_0^{n}\ln\rho\left(b^x\theta_1,\textit{\textbf{p}}  \right)dx}.\] Thus, \eqref{int_prod} is bounded by
\begin{equation}\label{int_int} C \int\limits_{0}^\pi  \exp\left({\int\limits_0^{n}\ln\rho\left(b^x\theta_1,\textit{\textbf{p}}  \right)dx}\right) \sin^{d-1}\theta_1 d\theta_1 = C \int\limits_{0}^\pi \exp\left({\int\limits_{\log_b \theta_1}^{n + \log_b \theta_1}\ln\rho\left( b^u,\textit{\textbf{p}}  \right)du} \right) \sin^{d-1}\theta_1  d\theta_1,\end{equation} where the last is obtained by the change of variables $x = u-\log_b \theta_1.$ Let us investigate separately the following two cases.

If there exists $u_0$ such that $\rho(b^{u_0},\textit{\textbf{p}}) = 1,$ then, $\rho(b^u,\textit{\textbf{p}}) \leq 1,\ u\geq u_0,$ as $\rho(\cdot,\textit{\textbf{p}})$ is non-increasing, and \eqref{int_int} is bounded by
\begin{equation}\label{int_est} C \int\limits_{0}^\pi  \exp\left({\int\limits_{\log_b \theta_1}^{u_0}\ln\rho\left( b^u,\textit{\textbf{p}}  \right)du}\right) \sin^{d-1}\theta_1 d\theta_1.\end{equation} Using the identity $a^{\log_bc}=c^{\log_ba}$ and as $\sin \theta_1 \leq\theta_1,$ one obtains that the above integrand is bounded by
\[ (\rho(0, \textit{\textbf{p}}))^{u_0-\log_b \theta_1 } \sin^{d-1}\theta_1  = C \theta_1^{-\log_b\rho(0,\textit{\textbf{p}})} \sin^{d-1}\theta_1 \leq C \theta_1^{d-1-\log_b\rho(0,\textit{\textbf{p}})}.\] The above function is integrable on $[0,\pi]$ if $d-1-\log_b\rho(0, \textit{\textbf{p}})>-1,$ which is equivalent to $b>\rho^{\frac{1}{d}}(0, \textit{\textbf{p}} ).$ 

Now let us study the case when $\rho(b^u,\textit{\textbf{p}}) > 1$ for all $u,$ then \eqref{int_int} is bounded by
\[ C \int\limits_{0}^\pi \exp\left({\int\limits_{\log_b \theta_1 }^{\infty}\ln\rho\left( b^u,\textit{\textbf{p}}  \right)du}\right)\sin^{d-1}\theta_1 d\theta_1 \]
\begin{equation}\label{int_est_2} = C \int\limits_{0}^\pi  \exp\left({\int\limits_{\log_b \theta_1 }^{0}\ln\rho\left( b^u,\textit{\textbf{p}}  \right)du} + {\int\limits_{0}^{\infty}\ln\rho\left( b^u,\textit{\textbf{p}}  \right)du}\right) \sin^{d-1}\theta_1 d\theta_1.\end{equation} The second integral in the exponent is bounded if and only if 
$\sum\limits_{i = 0}^\infty\ln\left(\rho(b^i,\textbf{\textit{p}})\right)<\infty,$ which is the condition \eqref{cond2.2_mult}. Thus, the integrand in \eqref{int_est_2} is bounded by
\[ C(\rho(0, \textit{\textbf{p}}))^{-\log_b \theta_1}  \sin^{d-1}\theta_1= C  \theta_1^{d-1-\log_b\rho(0,\textit{\textbf{p}})},\] which is integrable under the same condition as in the first case. 

Thus, there exists a constant majorizing $E\mu_{n}^p(B)$ for all $n,$ and $\mu_n(B)\to\mu(B), \ n\to\infty,$ with probability $1$ and in the space $L_p.$ Finally, as the system of sets $\mathfrak{B}$ is at most countable, the convergence with probability~$1$ holds for the whole system $\mathfrak{B}.$ \end{proof}

A partial case is the result about $L_p, \ p\geq2, \ p\in\mathbb{N},$ convergence.

\begin{corollary}
Let random fields $\Lambda^{(i)}(\cdot),\ i\in \mathbb{N}_0,$ satisfy Assumption {\rm \ref{assump_main_mult}} and Assumption {\rm \ref{assump2_mult}} for the vector $\textit{\textbf{p}} = (1,1,..,1)$ consisting of $p$ elements, such that 

\begin{equation*}
b>\max\limits_{i\in\mathbb{N}_0,\ \bm{\theta}\in\mathbb{S}^d}\left(E(\Lambda^{(i)}(\bm{\theta}))^p\right)^\frac{1}{d}
\end{equation*} and

\begin{equation*}
\sum_{i=0}^\infty\ln\left(\rho(b^i, p)\right) < \infty.\end{equation*}  If $\mathfrak{B} = \{B: \ B\in\mathcal{B}(\mathbb{S}^d) \}$ is a fixed finite or countable system of Borel sets, then, there exists a limiting measure $\mu(\cdot)$ such that  for all $\ B\in\mathfrak{B}$ it holds $\mu_n(B)\to \mu(B)$, when $\ n\to\infty,$ with probability $1$ and in the space $L_p.$

\end{corollary}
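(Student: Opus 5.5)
The plan is to derive the Corollary directly from Theorem \ref{th2_mult}, applied with $k=p$ and $\textit{\textbf{p}}=(1,1,\dots,1)$. This vector satisfies $p_j\geq1$ and $\sum_j p_j=p\geq2$, so the hypotheses of the Theorem on the vector hold. Condition \eqref{cond2.2_mult} for this vector is literally the second displayed assumption of the Corollary, since $\rho(\cdot,p)$ denotes $\rho(\cdot,\textit{\textbf{p}})$ for this $\textit{\textbf{p}}$. The only thing to check is that the first assumption of the Corollary implies \eqref{cond2.1_mult}, i.e. $b>\rho^{1/d}(0,p)$. Equivalently, $\rho(0,p)$ may be taken to be $\max_{i,\bm{\theta}}E(\Lambda^{(i)}(\bm{\theta}))^p$.

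The value $\rho(0,\cdot)$ enters the proof of Theorem \ref{th2_mult} in only one way. When all $\bm{\theta}_j$ coincide, so that the minimal distance is $0$, the mixing inequality \eqref{assump2_mult_ineq} gives $E\prod_j\Lambda^{(i)}(\bm{\theta}_j)=E(\Lambda^{(i)}(\bm{\theta}))^p\leq\rho(0,p)$. Hence any admissible $\rho$ must satisfy $\rho(0,p)\geq M:=\max_{i,\bm{\theta}}E(\Lambda^{(i)}(\bm{\theta}))^p$.

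I would redefine $\tilde\rho(0,p):=M$ and keep $\tilde\rho(t,p)=\rho(t,p)$ for $t>0$. I then need to check three things.
\begin{enumerate}
\item $\tilde\rho$ is still non-increasing. This requires $\rho(t,p)\leq M$ for $t>0$, so before redefining I would replace $\rho$ by $\min(\rho,M)$.
\item Inequality \eqref{assump2_mult_ineq} still holds. For arbitrary points, H\"older's inequality gives $E\prod_{j=1}^p\Lambda^{(i)}(\bm{\theta}_j)\leq\prod_j(E(\Lambda^{(i)}(\bm{\theta}_j))^p)^{1/p}\leq M$, so capping at $M$ preserves the inequality everywhere. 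At distance $0$ the expectation is exactly a $p$-th moment, which is at most $M$.
\item Condition \eqref{cond2.2_mult} survives. Capping only decreases $\rho$, so $\sum_i\ln\tilde\rho(b^i,p)\leq\sum_i\ln\rho(b^i,p)<\infty$.
\end{enumerate}

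With $\tilde\rho$ in place, \eqref{cond2.1_mult} reads $b>M^{1/d}$, which is exactly the first assumption of the Corollary. Applying Theorem \ref{th2_mult} then gives the existence of $\mu$ and the convergence $\mu_n(B)\to\mu(B)$ with probability $1$ and in $L_p$ for all $B\in\mathfrak{B}$.

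The main obstacle is the subtlety that the stated maximum must exist (or be read as a supremum) and be finite. Assumption \ref{assump_main_mult} only guarantees second moments, so finiteness of $M$ is implicitly part of the hypothesis. I would remark on this and otherwise treat $M$ as a finite supremum; the argument above is unchanged with $\sup$ in place of $\max$.
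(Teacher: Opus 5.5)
Your proposal is correct and follows the paper's route: the paper states this corollary without proof, as the special case $\textit{\textbf{p}}=(1,\dots,1)$ of Theorem~\ref{th2_mult}, and you derive it the same way. Your replacement of $\rho$ by $\min(\rho,M)$, justified by the generalized H\"older bound $E\prod_{j=1}^p\Lambda^{(i)}(\bm{\theta}_j)\leq M$, supplies a step the paper leaves implicit: Assumption~\ref{assump2_mult} only forces $\rho(0,p)\geq M$, so one must check that $\rho(0,p)$ can be lowered to $M$ while keeping monotonicity, \eqref{assump2_mult_ineq} and \eqref{cond2.2_mult}, which you do.
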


Next, let us establish conditions for the non-degeneracy of the obtained in Theorem~\ref{th2_mult} limiting measure $\mu(\cdot).$ 

\begin{assumption}
\label{assump3_mult} Let there exist a point $\widetilde{\bm{\theta}}\in\mathbb{S}^d,$ a vector $\textit{\textbf{p}} = (p_1,p_2,...,p_k),\ p_j\geq0, \ j=\overline{1,k}, \ k\geq2,$ and a positive non-increasing function $\upupsilon(\cdot,\textit{\textbf{p}})$  such that
\[\upupsilon(b^ix, \textbf{\textit{p}}) \leq \min\limits_{\substack{\bm{\theta}_j\in B_x(\widetilde{\bm{\theta}}) \\ j=\overline{1,k}}}E\bigg(\prod_{j=1}^k (\Lambda^{(i)}(\bm{\theta}_j))^{p_j}\bigg),\] for all $i\in\mathbb{N}_0,$ and $x\in[0,\pi],$ where $b>1$ is the same as in Assumption {\rm \ref{assump2_mult}}.
\end{assumption} 
Let us use the notation $\upupsilon(\cdot, p)$  if $\textbf{\textit{p}}$ is a unit vector consisting of $p$ elements.

\begin{remark} \label{remark1}
If the random fields $\Lambda^{(i)}(\cdot), \ i\in\mathbb{N}_0,$ are strongly isotropic and satisfy Assumption~{\rm \ref{assump3_mult}} for some point $\widetilde{\bm{\theta}}\in\mathbb{S}^d$, a vector $\bm{p}$ and a function $\upupsilon(\cdot,\textbf{\textit{p}}),$ then Assumption {\rm \ref{assump3_mult}} is satisfied for all $\bm{\theta}\in\mathbb{S}^d$ with the vector $\bm{p}$ and the function $\upupsilon(\cdot,  \textbf{\textit{p}}).$ 
\end{remark}

The following Theorem provides conditions for the non-degeneracy of the limiting measure~$\mu(\cdot)$.

\begin{theorem}\label{th3_mult} Let the conditions of Theorem {\rm \ref{th2_mult}} be satisfied, i.e. $\mu(B)\in L_p, \ p\geq2,$ for $B\in \mathfrak{B},$ and Assumption {\rm \ref{assump3_mult}} holds true for some $\widetilde{\bm{\theta}}\in\mathbb{S}^d,$ $k$-dimensional vector $\widetilde{\textbf{\textit{p}}}=(q/k,...,q/k), \ q\in[1,p],$ and function $\upupsilon(\cdot, \widetilde{\textbf{\textit{p}}}).$ If
\begin{equation}\label{cond:nondeg} \sum\limits_{i=0}^\infty \ln\left( \frac{ E\left(\Lambda^{({i})}(\widetilde{\bm{\theta}})\right)^q}{\upupsilon(b^{-i}, \widetilde{\textbf{\textit{p}}})} \right)<\infty,\end{equation} then, the limiting measure $\mu(\cdot)$ is non-degenerate, and it holds
\begin{equation}\label{eq:low_bound} E\mu^q(B_u(\widetilde{\bm{\theta}}))\geq C u^{dq-\log_b\sigma_q},\ {\rm when} \ u\to0,
\end{equation} where $\sigma_q=\min\limits_{i\in\mathbb{N}_0} E\left(\Lambda^{(i)}(\widetilde{\bm{\theta}})\right)^q.$
\end{theorem}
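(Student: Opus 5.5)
We plan to bound $E\mu^q(B_u(\widetilde{\bm{\theta}}))$ from below by a moment of a product integrand, and then evaluate that moment with Assumption \ref{assump3_mult} and condition \eqref{cond:nondeg}.

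\emph{Step 1: Fatou and the convergence of Theorem~\ref{th2_mult}.} Add the caps $B_u(\widetilde{\bm{\theta}})$, $u=b^{-m}$, $m\in\mathbb{N}_0$, to the countable system $\mathfrak{B}$, so that Theorem~\ref{th2_mult} applies to them. Since $\mu_n(B)\to\mu(B)$ in $L_p$ and $q\le p$, we also have $E\mu^q(B)=\lim_n E\mu_n^q(B)$. It therefore suffices to bound $E\mu_n^q(B_u(\widetilde{\bm{\theta}}))$ from below uniformly in $n$.

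\emph{Step 2: lower bound via Jensen/H\"older in reverse.} Write $\mu_n^q(B)=\big(\mu_n^{q/k}(B)\big)^k$ with $B=B_u(\widetilde{\bm{\theta}})$. Since $q/k$ may be less than $1$, H\"older's inequality is not directly usable. Instead, normalize $\sigma$ on $B$ and use the convexity of $x\mapsto x^{q}$ for $q\ge1$:
\[
\mu_n^q(B)=\sigma(B)^q\Big(\tfrac{1}{\sigma(B)}\int_B\Lambda_n\,d\sigma\Big)^q
\ge \sigma(B)^{q-k}\int_{B^k}\prod_{j=1}^k\Lambda_n(\bm{\theta}_j)^{q/k}\prod_j\sigma(d\bm{\theta}_j).
\]
This is justified by Jensen applied to the product measure and the geometric-mean inequality $\prod_j a_j^{1/k}\le \frac1k\sum_j a_j$; concretely, $\big(\frac1k\sum_j a_j\big)^q\ge\prod_j a_j^{q/k}$ is applied to $a_j=\Lambda_n(\bm{\theta}_j)$ averaged over $B^k$. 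Taking expectations, using Tonelli and independence over $i$, and applying Assumption~\ref{assump3_mult} with $x=u$ gives
\[
E\mu_n^q(B)\ge \sigma(B)^q\prod_{i=0}^n \upupsilon(b^iu,\widetilde{\textbf{\textit{p}}}).
\]

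\emph{Step 3: evaluating the product with $u=b^{-m}$.} For $i\ge m$ we have $b^iu\ge1$. For these indices we cannot use $\upupsilon$ sharply, so we must handle them differently. The key step, and the main obstacle, is how to treat the large scales. We plan to split the lower bound. For $i< m$, write $\upupsilon(b^{i-m})=E(\Lambda^{(i)})^q\cdot\upupsilon(b^{-(m-i)})/E(\Lambda^{(i)})^q$. The factors $E(\Lambda^{(i)}(\widetilde{\bm{\theta}}))^q\ge\sigma_q$ contribute at least $\sigma_q^m=u^{-\log_b\sigma_q}$. The ratios are controlled by \eqref{cond:nondeg}, whose convergence gives $\prod_i \upupsilon(b^{-i})/E(\Lambda^{(i)})^q\ge C>0$ uniformly. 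The index shift between $i$ and $m-i$ requires care, and we may need to assume that the moments are comparable in $i$ or to reindex using monotonicity of $\upupsilon$. For $i\ge m$ (the coarse scales), we instead use the original Jensen bound $E\prod_{i\ge m}\cdots\ge$ via $E\Lambda^{(i)}=1$. Concretely, we condition on the levels $i<m$ and apply Jensen in the remaining fields: $E[(\int_B X\,Y_n)^q]\ge E[(\int_B X\,E Y_n)^q]=E[(\int_B X)^q]$, where $X=\Lambda_{m-1}$ and $Y_n=\prod_{i=m}^n\Lambda^{(i)}$ is independent of $X$ with mean $1$. This shows that $E\mu_n^q(B)\ge E\mu_{m-1}^q(B)$, which is simply the submartingale property. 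Hence we only need to bound $E\mu_{m-1}^q(B)$ from below with Step 2 restricted to $i<m$.

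\emph{Step 4: conclusion.} Combining $\sigma(B)^q\sim Cu^{dq}$ (Lemma~\ref{lemma_as}) with the factor $u^{-\log_b\sigma_q}$ from Step 3 yields \eqref{eq:low_bound}. Non-degeneracy follows because $E\mu^q(B)>0$ forces $P(\mu(B)>0)>0$. Alternatively, with $q=1$ applied to $B=\mathbb{S}^d$, $E\mu(\mathbb{S}^d)=\sigma(\mathbb{S}^d)>0$ holds directly by $L_p$ convergence.
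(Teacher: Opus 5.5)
The key inequality in Step 2 holds only under an extra condition, and the justification you give for it does not work. Jensen for the convex map $x\mapsto x^q$ and AM--GM both bound quantities \emph{from above} by the $B^k$-average of $\bigl(\frac1k\sum_j\Lambda_n(\bm{\theta}_j)\bigr)^q$. They therefore cannot be chained to compare $\mu_n^q(B)$ with $\sigma(B)^{q-k}\int_{B^k}\prod_j\Lambda_n(\bm{\theta}_j)^{q/k}$.

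By Tonelli the latter equals $\sigma(B)^{q}\bigl(\sigma(B)^{-1}\int_B\Lambda_n^{q/k}\,d\sigma\bigr)^{k}$. So your claim is exactly $\sigma(B)^{-1}\int_B\Lambda_n^{q/k}\,d\sigma\le\bigl(\sigma(B)^{-1}\int_B\Lambda_n\,d\sigma\bigr)^{q/k}$, i.e.\ Jensen for the \emph{concave} map $x\mapsto x^{q/k}$. This holds for $q\le k$. For $q>k$ it fails, strictly whenever $\Lambda_n$ is non-constant on $B$. Your remark that $q/k<1$ is the obstacle is thus backwards.

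You need $k\ge q$. The paper uses the same restriction implicitly, since its reverse H\"older step requires $\widetilde{p}=q/k\in(0,1)$. With this corrected justification, your argument goes through: Step 2 restricted to levels $i<m$, plus the conditional-Jensen (submartingale) truncation in Step 3.

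The index shift needs no extra assumption, because finite products can be re-paired. Write $e_j=E(\Lambda^{(j)}(\widetilde{\bm{\theta}}))^q$, suppress $\widetilde{\textbf{\textit{p}}}$, and take $u=b^{-m}$. Then
\[
\prod_{i=0}^{m-1}\upupsilon(b^{i-m})=\prod_{j=1}^{m}\upupsilon(b^{-j})=\prod_{j=1}^{m}e_j\prod_{j=1}^{m}\frac{\upupsilon(b^{-j})}{e_j}\ge\sigma_q^{m}\prod_{j=1}^{\infty}\frac{\upupsilon(b^{-j})}{e_j}.
\]
Each ratio is at most $1$: apply Assumption~\ref{assump3_mult} at level $j$ with $x=b^{-2j}$ and all points equal to $\widetilde{\bm{\theta}}$. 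The infinite product is positive by \eqref{cond:nondeg}. The paper performs exactly this re-pairing.

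Compared with the paper, your route is a genuine simplification. The paper derives the one-step recursion $E\mu_n^q\ge\upupsilon(b^nu)E\mu_{n-1}^q$ via reverse H\"older, which is Jensen for $x^{q/k}$ against the random probability measure $\Lambda_{n-1}\,d\sigma/\mu_{n-1}(B)$. It then iterates this down to $E\mu_1^q\ge\sigma(B)^q$. You instead apply unweighted Jensen once to the whole product $\Lambda_{m-1}$ and let independence factorize the expectation. This reaches the same final estimate without negative exponents.

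Finally, you only treat $u=b^{-m}$. To cover all small $u$, either use monotonicity of $u\mapsto\mu(B_u(\widetilde{\bm{\theta}}))$, or, as the paper does, take $m=[-\log_bu]$ and use that $\upupsilon$ is non-increasing together with $\sigma_q\ge1$.
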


\begin{remark}\label{remark_eventual} The inequality \eqref{eq:low_bound} is understood in the following sense:  there exist constants $C>0$ and $u_0>0$ such that $ E\mu^q(B_u(\widetilde{\bm{\theta}}))\geq C u^{dq-\log_b\sigma_q}$ for all $u\in(0,u_0).$
\end{remark}

\begin{proof} Without loss of generality, assume that $\widetilde{\bm{\theta}} = \mathbb{0}_d.$ Let us derive the lower bound for $E\mu^q(B_u(\mathbb{0}_d))$ when $u\to0.$

By using the reverse H\"older's inequality \cite[Page 140]{hardy1934inequalities} $||f_1f_2||_1 \geq ||f_1||_{\widetilde{p}}||f_2||_{\widetilde{q}},\ \frac{1}{\widetilde{p}}+\frac{1}{\widetilde{q}}=1, \ \widetilde{p}\in(0,1),\ \widetilde{q}<0,$ one gets
\begin{equation}\label{holder_upper} ||f_1f_2||_1^q = ||f_1|f_2|^{1/\widetilde{p}}|f_2|^{1/\widetilde{q}}||_1^q \geq ||f_1|f_2|^{1/\widetilde{p}}||_{\widetilde{p}}^q|||f_2|^{1/\widetilde{q}}||_{\widetilde{q}}^q = |||f_1|^{\widetilde{p}}f_2||_1^{q/\widetilde{p}}||f_2||_1^{q/\widetilde{q}}.\end{equation} Application of the above inequality for $\widetilde{p}$ such that $q/\widetilde{p}=k\in\mathbb{N}$ and $f_1(\bm{\theta}) = \Lambda^{(n)}(\bm{\theta}),$ $f_2(\bm{\theta})=\Lambda_{n-1}(\bm{\theta})$ results in
\[ E\mu_{n}^q(B_u(\mathbb{0}_d)) = E\left(\int\limits_{B_u(\mathbb{0}_d)}\Lambda^{(n)}(\bm{\theta}) \Lambda_{n-1}(\bm{\theta}) \sigma(d\bm{\theta})\right)^q\]
\[ \geq  E\left( \int\limits_{B_u(\mathbb{0}_d)} (\Lambda^{(n)}(\bm{\theta}))^{\widetilde{p}}\Lambda_{n-1}(\bm{\theta}) \sigma(d\bm{\theta})\right)^{q/\widetilde{p}}\left(\int\limits_{B_u(\mathbb{0}_d)} \Lambda_{n-1}(\bm{\theta})\sigma(d\bm{\theta}) \right)^{q/\widetilde{q}}\]
\[ = \int\limits_{\left( B_u(\mathbb{0}_d)\right)^k} E\left(\prod\limits_{j=1}^k(\Lambda^{(n)}(\bm{\theta}_j))^{\widetilde{p}}\right) E\left( \prod_{j=1}^k\Lambda_{n-1}(\bm{\theta}_j) \left(\int\limits_{B_u(\mathbb{0}_d)} \Lambda_{n-1}(\bm{\theta}')\sigma(d\bm{\theta}') \right)^{q/\widetilde{q}}\right) \prod_{j=1}^k\sigma(d\bm{\theta}_j)\]
\[ \geq \min\limits_{\substack{\bm{\theta}_j\in B_u(\mathbb{0}_d) \\ j=\overline{1,k}}}E\bigg(\prod_{j=1}^k (\Lambda^{(n)}(\bm{\theta}_j))^{\widetilde{p}}\bigg) \] \[\times E\left( \int\limits_{\left( B_u(\mathbb{0}_d)\right)^k}\prod_{j=1}^k\Lambda_{n-1}(\bm{\theta}_j) \left(  \int\limits_{B_u(\mathbb{0}_d)} \Lambda_{n-1}(\bm{\theta}')\sigma(d\bm{\theta}')\right)^{q/\widetilde{q}} \prod_{j=1}^k\sigma(d\bm{\theta}_j)\right) \]
\begin{equation}\label{eq:non_deg_est} =  \min\limits_{\substack{\bm{\theta}_j\in B_u(\mathbb{0}_d) \\ j=\overline{1,k}}}E\left(\prod_{j=1}^k (\Lambda^{(n)}(\bm{\theta}_j))^{\widetilde{p}}\right)  E\left(\int\limits_{B_u(\mathbb{0}_d)} \Lambda_{n-1}(\bm{\theta}) \sigma(d\bm{\theta})\right)^q.\end{equation}
Due to Assumption \ref{assump3_mult}, it holds
\[ \upupsilon(b^{n}u, \widetilde{\textbf{\textit{p}}}) \leq \min\limits_{\substack{\bm{\theta}_j\in B_u(\mathbb{0}_d) \\ j=\overline{1,k}}}E\left(\prod_{j=1}^k (\Lambda^{(n)}(\bm{\theta}_j))^{\widetilde{p}}\right).\] By applying the above in \eqref{eq:non_deg_est}, one gets
\begin{equation}\label{eq:non_deg_est2} E\mu_{n}^q(B_u(\mathbb{0}_d))\geq \upupsilon(b^{n}u, \widetilde{\textbf{\textit{p}}}) E\mu_{n-1}^q(B_u(\mathbb{0}_d))\geq \frac{\upupsilon(b^{n}u, \widetilde{\textbf{\textit{p}}}) \sigma_q}{E\left(\Lambda^{({n-1})}(\mathbb{0}_d)\right)^q} E\mu_{n-1}^q(B_u(\mathbb{0}_d)).\end{equation}

Let $n_u=[-\log_bu]$ be the largest integer such that $n_u\leq-\log_bu.$ As $\upupsilon(\cdot, \widetilde{\textbf{\textit{p}}})$ is non-increasing, from \eqref{eq:non_deg_est2} one obtains
\[E\mu_{n}^q(B_u(\mathbb{0}_d))\geq \frac{\upupsilon(b^{n-n_u}, \widetilde{\textbf{\textit{p}}})\sigma_q}{E\left(\Lambda^{({n-1})}(\mathbb{0}_d)\right)^q} E\mu_{n-1}^q(B_u(\mathbb{0}_d)).\]
Iteration of the above inequality results in
\[  E\mu_{n_u-1}^q(B_u(\mathbb{0}_d))\geq E\mu_1^q(B_u(\mathbb{0}_d)) \sigma_q^{n_u-2} \prod_{i=2}^{n_u-1} \frac{\upupsilon(b^{i-n_u}, \widetilde{\textbf{\textit{p}}})}{E\left(\Lambda^{({i-1})}(\mathbb{0}_d)\right)^q}\] \begin{equation}\label{eq:non_deg_est3}=E\mu_1^q(B_u(\mathbb{0}_d)) \sigma_q^{n_u-2} \prod_{i=1}^{n_u-2} \frac{\upupsilon(b^{-i}, \widetilde{\textbf{\textit{p}}})}{E\left(\Lambda^{({i})}(\mathbb{0}_d)\right)^q}.\end{equation} 

Since for $q\geq1$ it holds $E\mu_{1}^q(B_u(\mathbb{0}_d)) \geq (E\mu_{1}(B_u(\mathbb{0}_d)))^q$, and as by Lemma \ref{lemma_as} $\sigma(B_u(\mathbb{0}_d))\sim Cu^d,\ u\to0,$ one gets that $E\mu_{1}^q(B_u(\mathbb{0}_d))\geq C u^{dq}, \ u\to0.$  As by Assumption~\ref{assump3_mult} 
\[\upupsilon(b^{-i}, \widetilde{\textbf{\textit{p}}})\leq E\left(\Lambda^{({i})}(\mathbb{0}_d)\right)^q, \ i\in\mathbb{N}_0,\] and it holds
\[ \sigma_q^{n_u} = \sigma_q^{-\log_bu+\log_bu+[-\log_bu]}\geq C \sigma_q^{-\log_bu} = C u^{-\log_b\sigma_q},\] one obtains from \eqref{eq:non_deg_est3}
\[ E\mu_{n_u-1}^q(B_u(\mathbb{0}_d))\geq C u^{dq-\log_b\sigma_q} \prod_{i=1}^{\infty} \frac{\upupsilon(b^{-i}, \widetilde{\textbf{\textit{p}}})}{E\left(\Lambda^{({i})}(\mathbb{0}_d)\right)^q}\geq C u^{dq-\log_b\sigma_q},\ {\rm when} \ u\to0,\] where the last holds due to \eqref{cond:nondeg}. Since $\{\mu_n^q(B_u(\mathbb{0}_d)), \ n\geq 1\},\ q~\geq~1,$ is a submartingale, it holds $E\mu^q(B_u(\mathbb{0}_d))\geq E\mu_{n_u-1}^q(B_u(\mathbb{0}_d)),$ which finishes the proof of Theorem. \end{proof}

Let us consider the geometric Gaussian scenario when the convergence and non-degeneracy conditions of the limiting measure $\mu(\cdot)$ are expressed in terms of the second-order~moments.

\begin{assumption}\label{eq:log_Gauss}
Let $X^{(i)}(\bm{u}), \ \bm{u}\in\mathbb{R}^{d+1}, \ i\in\mathbb{N}_0,$ be a family of independent identically distributed homogeneous isotropic Gaussian random fields with zero means, variances $s^2>0,$ and covariances $r(||\bm{u}_1-\bm{u}_2||):=EX^{(i)}(\bm{u_1})X^{(i)}(\bm{u_2}), \ \bm{u_1},\bm{u_2}\in\mathbb{R}^{d+1}, \ i\in\mathbb{N}_0.$ Let
\begin{equation*}
\Lambda^{(i)}(\bm{\theta}) = \frac{e^{X^{(i)}(b^i, \bm{\theta})}}{Ee^{X^{(i)}(b^i, \bm{\theta})}},\ i\in\mathbb{N}_0, \ b>1, \ \bm{\theta}\in\mathbb{S}^d,
\end{equation*} where the notation $(b, \bm{\theta}), \ b\geq 0, \ \bm{\theta}\in\mathbb{S}^d,$ is used to denote points in the $d+1$-dimensional spherical system of coordinates.  
\end{assumption}

\begin{corollary}\label{cor_1}
Let $\Lambda^{(i)}(\cdot), \ i\in\mathbb{N}_0,$ satisfy Assumption {\rm \ref{eq:log_Gauss}} such that the covariance function $r(\cdot)$ is non-increasing, for an integer $p\geq2$ it holds

\begin{equation}\label{cond_gau_1} b>  \exp\left(\frac{p(p-1)s^2}{2d}\right),\end{equation} and
\begin{equation}\label{cond_gau_2} \sum\limits_{i=0}^\infty r( b^i) < \infty.\end{equation} If $\mathfrak{B} = \{B: \ B\in\mathcal{B}(\mathbb{S}^d)\}$ is a fixed finite or countable system of Borel sets, then, there exists a limiting measure $\mu(\cdot)$ such that for all $\ B\in\mathfrak{B}$ it holds $\mu_n(B)\to \mu(B)$, as $\ n\to\infty,$ with probability $1$ and in the space $L_p.$

If, in addition,

\begin{equation}\label{cond_gau_4} \sum\limits_{i=0}^\infty \left(s^2 - r\left(b^{-i} \right)\right)<\infty,\end{equation} then the limiting measure $\mu(\cdot)$ is non-degenerate.\end{corollary}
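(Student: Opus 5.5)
The plan is to deduce Corollary \ref{cor_1} from Theorem \ref{th2_mult} and Theorem \ref{th3_mult}. Take the vector $\bm{p}=(1,\dots,1)$ with $p$ entries, and construct explicit functions $\rho(\cdot,p)$ and $\upupsilon(\cdot,p)$ from the Gaussian moment formula.

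For points $\bm{\theta}_1,\dots,\bm{\theta}_p\in\mathbb{S}^d$, set $\bm{u}_j=(b^i,\bm{\theta}_j)$. Since the vector $(X^{(i)}(\bm{u}_j))_j$ is Gaussian,
\[ E\prod_{j=1}^p\Lambda^{(i)}(\bm{\theta}_j)=\exp\Big(\sum_{m<h} r(\|\bm{u}_m-\bm{u}_h\|)\Big). \]
By \eqref{dist} we have $\|\bm{u}_m-\bm{u}_h\|=2b^i\sin(d_{\mathbb{S}^d}(\bm{\theta}_m,\bm{\theta}_h)/2)\ge \frac{2}{\pi}b^i d_{\mathbb{S}^d}(\bm{\theta}_m,\bm{\theta}_h)$. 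Because $r$ is non-increasing, each pair term is at most $r(\frac{2}{\pi}b^i\min_{m\ne h}d_{\mathbb{S}^d})$.

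I would bound every pair term by $s^2$ except the closest pair, which carries the decaying factor. This gives
\[ \rho(x,p)=\exp\Big(\big(\tfrac{p(p-1)}{2}-1\big)s^2+r(\tfrac{2}{\pi}x)\Big). \]
This function is non-increasing and satisfies $\rho(0,p)=e^{p(p-1)s^2/2}$, so \eqref{cond2.1_mult} is exactly \eqref{cond_gau_1}.

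The main obstacle is \eqref{cond2.2_mult}. With this $\rho$, the sum $\sum_i\ln\rho(b^i,p)$ diverges whenever $p>2$, because the constant term in the exponent does not decay. I see two ways to handle this.
\begin{itemize}
\item \textbf{Refine Assumption \ref{assump2_mult}.} Instead of the minimum distance, use bounds in which every pair distance is scaled by $b^i$. Then, in the iteration inside the proof of Theorem \ref{th2_mult}, each pair term is estimated separately. Tracing \eqref{int_int}, the integral is controlled by $\theta^{d-1-\log_b\rho(0)}$ near the origin and by $\sum_i r(\frac{2}{\pi}b^i)$ at infinity.
\item \textbf{Read the condition as the paper effectively uses it.} Only summability of the decaying part matters. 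The case analysis in the proof already handles $\rho\le1$ beyond some $u_0$, and otherwise it needs $\int_0^\infty\ln\rho(b^u)\,du<\infty$ only after the constant factor is absorbed into $\rho(0)$.
\end{itemize}
Either way, \eqref{cond2.2_mult} reduces to $\sum_i r(\frac{2}{\pi}b^i)<\infty$. Monotonicity of $r$ lets me compare this with $\sum_i r(b^{i-c})$ for a fixed shift $c$, which is finite by \eqref{cond_gau_2}. I would write this step carefully, since it is where the argument is weakest.

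For non-degeneracy I use Theorem \ref{th3_mult} with $q=p$ and $\widetilde{\bm p}=(1,\dots,1)$, so $k=p$, and with any $\widetilde{\bm{\theta}}$; strong isotropy and Remark \ref{remark1} make the choice of point irrelevant. For points in the cap $B_x(\widetilde{\bm{\theta}})$, pairwise angular distances are at most $2x$. Hence the Euclidean distances $\|\bm{u}_m-\bm{u}_h\|$ are at most $2b^ix$, and $r\ge r(2b^ix)$ for each pair. This gives
\[ \upupsilon(x,p)=\exp\Big(\tfrac{p(p-1)}{2}r(2x)\Big), \]
which is positive and non-increasing. Since $E(\Lambda^{(i)})^p=e^{p(p-1)s^2/2}$, condition \eqref{cond:nondeg} becomes $\frac{p(p-1)}{2}\sum_i\big(s^2-r(2b^{-i})\big)<\infty$. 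After shifting the index by $\log_b2$ and using monotonicity, this follows from \eqref{cond_gau_4}. Theorem \ref{th3_mult} then gives non-degeneracy.
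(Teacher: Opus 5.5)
\textbf{There is a genuine gap in the convergence half.} Your choice
\[
\rho(x,p)=\exp\big((\tfrac{p(p-1)}{2}-1)s^2+r(\tfrac{2}{\pi}x)\big)
\]
violates \eqref{cond2.2_mult} for $p\ge 3$. Since $|r|\le s^2$, it satisfies $\ln\rho\ge(\tfrac{p(p-1)}{2}-2)s^2\ge s^2>0$ everywhere. Neither workaround repairs this.

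The second workaround is false. In the proof of Theorem~\ref{th2_mult} the integrand is $\prod_{i=0}^n\rho(b^i\theta_1,\bm{p})$, so the constant factor $e^{(\frac{p(p-1)}{2}-1)s^2}$ enters once per level, i.e.\ $n+1$ times. The resulting bound on $E\mu_n^p(B)$ therefore grows exponentially in $n$. Nothing can be "absorbed into $\rho(0,\bm{p})$", and you are in the case $\rho(b^u)>1$ for all $u$, where $\int_0^\infty\ln\rho(b^u)\,du<\infty$ is genuinely needed. The first workaround amounts to re-proving Theorem~\ref{th2_mult} under a different hypothesis, which you only gesture at. As written, (cond2.2) does not reduce to $\sum_i r(\frac{2}{\pi}b^i)<\infty$ with your $\rho$.

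\textbf{The fix is already in your own text, and it is exactly the paper's route.} You correctly observe that every pair term satisfies
\[
r(\|\bm{u}_m-\bm{u}_h\|)\le r\big(\tfrac{2}{\pi}b^i\min_{m\neq h}d_{\mathbb{S}^d}(\bm{\theta}_m,\bm{\theta}_h)\big),
\]
because each pairwise distance dominates the minimum and $r$ is non-increasing. There is no reason to then replace all but one pair by $s^2$. Apply this bound to all $\binom{p}{2}$ pairs and take
\[
\rho(x,p)=\exp\big(\tfrac{p(p-1)}{2}\,r(\tfrac{2}{\pi}x)\big).
\]
This function has the following properties:
\begin{itemize}
\item It is non-increasing.
\item It has the same value $\rho(0,p)=e^{p(p-1)s^2/2}$, so \eqref{cond2.1_mult} is still exactly \eqref{cond_gau_1}.
\item It gives $\sum_i\ln\rho(b^i,p)=\tfrac{p(p-1)}{2}\sum_i r(\tfrac{2}{\pi}b^i)$, which is finite by \eqref{cond_gau_2} after your index shift.
\end{itemize}
Theorem~\ref{th2_mult} then applies as stated. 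This also supplies the hypotheses of Theorem~\ref{th3_mult}, which your non-degeneracy step needs.

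That non-degeneracy step coincides with the paper's argument and is fine: you take $q=p$, the unit vector, $\upupsilon(x,p)=\exp(\frac{p(p-1)}{2}r(2x))$, and reduce to \eqref{cond_gau_4} via monotonicity of $r$.
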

\begin{proof}
Let us show that Assumption {\rm\ref{assump2_mult}} is satisfied by estimating from above the expectation in~\eqref{assump2_mult_ineq}. Let $k\geq2$ and $\textit{\textbf{p}} = (p_1,p_2,...,p_k), \ p_i>0, \ i=\overline{1,k},$ such that $\sum_{j=1}^kp_j = p.$
As $X^{(i)}(\cdot), \ i\in\mathbb{N}_0,$ are homogeneous isotropic Gaussian fields on $\mathbb{R}^{d+1}$, the random variable $e^{\sum_{j=1}^k p_j X^{(i)}(b^i,\bm{\theta}_j)},$ $\bm{\theta}_j\in\mathbb{S}^d,\ j=\overline{1,k},$ has log-Gaussian distribution, and it~holds 
\[E\bigg(\prod_{j=1}^k (\Lambda^{(i)}(\bm{\theta}_j))^{p_j}\bigg) = \frac{1}{\prod\limits_{j=1}^k (Ee^{X^{(i)}(b^i,\bm{\theta}_j)})^{p_j}} Ee^{\sum\limits_{j=1}^k p_jX^{(i)}(b^i, \bm{\theta}_j)}=\frac{1}{e^{\frac{ps^2}{2}}} e^{{\frac{1}{2}E\left(\sum\limits_{j=1}^k p_j X^{(i)}(b^i,\bm{\theta}_j)\right)^2}}\] 
\[ =\frac{1}{e^{\frac{ps^2}{2}}} \exp\left(\frac{1}{2}\left( s^2 \sum_{j=1}^kp^2_j + \sum\limits_{l\neq v}p_lp_vr(d_{\rm Euc}((b^i,\bm{\theta}_l),(b^i,\bm{\theta}_v))\right) \right) \]
\begin{equation}\label{moment_mult} =\exp\left(\frac{1}{2}\left( s^2 \left(\sum_{j=1}^kp^2_j -p\right)  + \sum\limits_{l\neq v}p_lp_vr(b^id_{\rm Euc}(\bm{\theta}_l,\bm{\theta}_v))\right) \right),\end{equation} where the latter follows from $d_{\rm Euc}((\eta,\bm{\theta}_l),(\eta,\bm{\theta}_v)) = \eta\cdot d_{\rm Euc}(\bm{\theta}_l,\bm{\theta}_v),$ $\eta\geq0,\ \bm{\theta}_l,\bm{\theta}_v\in\mathbb{S}^d.$ As $r(\cdot)$ is non-increasing, the above is bounded by
\[
\exp\left(\frac{1}{2}\left( s^2 \left(\sum_{j=1}^kp^2_j -p\right) + \sum\limits_{l\neq v}p_lp_vr(b^i\min\limits_{m\neq h}d_{\rm Euc}(\bm{\theta}_m,\bm{\theta}_h))\right) \right).\]  As $\frac{2}{\pi}d_{\mathbb{S}^d}(\bm{\theta}_m,\bm{\theta}_h)\leq d_{\rm Euc}(\bm{\theta}_m,\bm{\theta}_h), \ \bm{\theta}_m,\bm{\theta}_h\in\mathbb{S}^d,$ from the latter follows that Assumption~{\rm \ref{assump2_mult}} is satisfied for the~function \begin{equation}\label{rho_gaus}\rho(\cdot,\textit{\textbf{p}} )=\exp\left(\frac{1}{2}\left( s^2 \left(\sum_{j=1}^kp^2_j -p\right) + \sum\limits_{l\neq v}p_lp_vr\left(\frac{2}{\pi}\ \cdot\right)\right) \right).\end{equation} By choosing the vector $\textbf{\textit{p}}$ to be a unit vector consisting of $p$ elements, one can see 
\[ \rho(0,p) =  \exp\left( \frac{p(p-1)s^2}{2} \right),\] and 
\[ \sum\limits_{i=0}^\infty\ln\left( \rho(b^i, p) \right) = \frac{p(p-1)}{2} \sum\limits_{i=0}^\infty r\left( \frac{2b^i}{\pi}\right).\] The above series converges, if the series in \eqref{cond_gau_2} converges. Thus, the assumptions of Theorem \ref{th2_mult} are satisfied if \eqref{cond_gau_1} and \eqref{cond_gau_2} hold true.

Now, let us show that the conditions of Theorem \ref{th3_mult} are satisfied such that the limiting measure $\mu(\cdot)$ is non-degenerate. Let $\bm{\theta}_j \in B_{x}(\mathbb{0}_d),\ x\in[0,\pi], \ j=\overline{1,k}, \ k\geq2.$ As $d_{\mathbb{S}^d}(\bm{\theta}_l,\bm{\theta}_v) \geq d_{\rm Euc}(\bm{\theta}_l,\bm{\theta}_v), \ l,v=\overline{1,k},$ one obtains $d_{\rm Euc}(\bm{\theta}_l,\bm{\theta}_v)\leq 2x, \ l,v=\overline{1,k},$ and as $r(\cdot)$ is non-increasing, from \eqref{moment_mult} it follows
\[ \min\limits_{\substack{\bm{\theta}_j\in B_x(\mathbb{0}_d) \\ j=\overline{1,k}}}E\bigg(\prod_{j=1}^k (\Lambda^{(i)}(\bm{\theta}_j))^{p_j}\bigg) \geq  \exp\left(\frac{1}{2}\left( s^2\left( \sum_{j=1}^kp^2_j -p\right) +  \sum\limits_{l\neq v}p_lp_vr(2b^i x)\right) \right),\] for all $i\in\mathbb{N}_0.$
Thus, Assumption {\rm \ref{assump3_mult}} is satisfied for the function 
\begin{equation}\label{upepsilon}\upupsilon(\cdot, \textbf{\textit{p}})= \exp\left(\frac{1}{2}\left( s^2\left(\sum_{j=1}^kp^2_j -p\right) + \sum\limits_{l\neq v}^kp_lp_vr\left(2\ \cdot\right)\right) \right).\end{equation} By choosing $\textbf{\textit{p}}$ to be the unit vector consisting of $p$ elements, one obtains
\[\upupsilon(b^{-i}, p)= \exp\left(\frac{p(p-1)r(2b^{-i})}{2} \right),\] and as \[E\left(\Lambda^{({i})}(\mathbb{0}_d)\right)^p = \exp\left( \frac{p(p-1)s^2}{2} \right),\ i\in\mathbb{N}_0,\] one can easily see that 

\[ \sum\limits_{i=0}^\infty\ln\left( \frac{E\left(\Lambda^{({i})}(\mathbb{0}_d)\right)^p}{\upupsilon(b^{-i}, p)} \right) = \frac{p(p-1)}{2}\sum\limits_{i=0}^\infty(s^2-r(2b^{-i})).\] The above series converges, if the series in \eqref{cond_gau_4} converges.  Thus, the assumptions of Theorem \ref{th3_mult} hold true if \eqref{cond_gau_4} is satisfied. \end{proof}

In the next section, we calculate the R\'enyi function of the limiting measure $\mu(\cdot).$

\section{R\'enyi function of the limiting measure}
\label{sec:mult}
In what follows, we assume that the following Assumption on moments is satisfied.

\begin{assumption}\label{assump_isot}
Let $\Lambda^{(i)}(\bm{\theta}),\ \bm{\theta}\in\mathbb{S}^d, \ i\in\mathbb{N}_0,$ be an infinite collection of independent strongly isotropic random fields such that $E(\Lambda^{(i)}(\mathbb{0}_d))^q = E(\Lambda^{(j)}(\bm{\theta}))^q,\ \bm{\theta}\in\mathbb{S}^d,\ q\in[0,p], $ $ p\geq 2, \ i,j\in~\mathbb{N}_0.$
\end{assumption}

To calculate the R\'enyi function of the limiting measure $\mu(\cdot),$ we need the following auxiliary result on its moments.

\begin{lemma}\label{lemma:mom_estim}
Let the conditions of Theorem {\rm \ref{th2_mult}} and Assumption {\rm \ref{assump_isot}}  be satisfied such that $\mu(B)\in L_p,\ B\in \mathfrak{B},$ for $p\geq2.$ If, for $\boldsymbol{q}=(q-1,1)$ with $q\in(0,1)$, the function $\rho(\cdot,\boldsymbol{q})$ is non-decreasing and satisfies
\begin{equation}\label{lemma:mom_estim_cond}
\sum_{i=0}^\infty \ln\left(\frac{\rho(b^{-i},\boldsymbol{q})}{E\bigl(\Lambda^{(i)}(\mathbb{0}_d)\bigr)^q}\right) < \infty,
\end{equation}
and if, for $q\in[1,p]$, the conditions of Theorem~{\rm\ref{th3_mult}} are satisfied, then there exist constants $C_1,C_2>0$ such that the following holds

\begin{equation}\label{ineq:mom} C_1 u^{dq - \log_bE(\Lambda^{(0)}(\mathbb{0}_d))^q} \leq E \mu^q(B_{u}(\bm{\theta})) \leq C_2 u^{dq - \log_bE(\Lambda^{(0)}(\mathbb{0}_d))^q},\ u\to0,\end{equation} for all $\ \bm{\theta}\in\mathbb{S}^d.$  

\begin{remark}
Analogously to \eqref{eq:low_bound}, \eqref{ineq:mom} is understood in the following sense: there exist constants $C_1, C_2 > 0$ and $u_0 > 0$ such that the upper and lower bounds hold for all $u \in (0,u_0)$.
\end{remark}

 \end{lemma}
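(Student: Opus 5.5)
The two ranges $q\in(0,1)$ and $q\in[1,p]$ need different tools, and in each range one bound is essentially immediate, so I would treat them separately. By Assumption~\ref{assump_isot} and Remark~\ref{remark1} every point of $\mathbb{S}^d$ behaves like $\mathbb{0}_d$. Also $E(\Lambda^{(i)}(\mathbb{0}_d))^q=E(\Lambda^{(0)}(\mathbb{0}_d))^q=:m_q$ for all $i$, so $\sigma_q=m_q$. Hence it suffices to work at $\bm{\theta}=\mathbb{0}_d$.

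\emph{Case $q\in[1,p]$.} The lower bound is exactly \eqref{eq:low_bound} of Theorem~\ref{th3_mult} with $\sigma_q=m_q$.

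For the upper bound I would use a scaling argument. Write $E\mu^q(B_u)=\lim_n E\mu_n^q(B_u)$, which is valid by $L_p$ convergence since $q\le p$. Set $n_u=[-\log_b u]$ and split $\Lambda_n=\Lambda_{n_u}\cdot\prod_{i>n_u}\Lambda^{(i)}$.

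For the first $n_u$ factors, observe that on $B_u$ the fields $\Lambda^{(i)}$ with $i\le n_u$ are "nearly constant" at scale $b^{-i}\gtrsim u$. I would bound their joint contribution by $m_q^{n_u}$ times a convergent product, using Assumption~\ref{assump2_mult} and \eqref{cond2.2_mult}.

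For the tail factors $i>n_u$, map $B_u$ onto a cap of fixed radius via the coordinate rescaling of Lemma~\ref{lemma_asymp}. Under this map the distances between points shrink like $u$, so the distance $b^i d$ becomes $b^{i-n_u}d'$. The remaining integral is then comparable to $u^{dq}$ times the $q$-th moment of a measure of the same type as in Theorem~\ref{th2_mult}, which is uniformly bounded. Combining the two parts with $m_q^{n_u}\le Cu^{-\log_b m_q}$ gives the upper bound.

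\emph{Case $q\in(0,1)$.} Here Jensen gives $E\mu_n^q$ nonincreasing in $n$, i.e.\ $\mu_n^q$ is a supermartingale. The upper bound therefore follows by the same iteration as in Theorem~\ref{th3_mult} with the inequalities reversed. Concretely, use concavity, $E(\int\Lambda^{(n)}f)^q\le \ldots$, via Hölder applied with $\bm q=(q-1,1)$: write $\mu_n^q=\mu_n^{q-1}\mu_n$ and expand $\mu_n$ as an integral. This yields
\[
E\mu^q_n(B_u)=\int_{B_u}E\bigl[\Lambda_n(\bm\theta)\,\mu_n^{q-1}(B_u)\bigr]\sigma(d\bm\theta),
\]
and since $q-1<0$, $\mu_n^{q-1}(B_u)\le(\int\ldots)$. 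Bounding the $i$-th level factor by $\rho(b^iu,\bm q)\le\rho(b^{i-n_u},\bm q)$ (here $\rho$ is non-decreasing) gives
\[
E\mu_n^q\le C\,E\mu_1^q\prod_{i\le n_u}\rho(b^{i-n_u},\bm q).
\]
Condition~\eqref{lemma:mom_estim_cond} turns this product into $C m_q^{n_u}$. For the lower bound when $q<1$, I would use $\mu(B_u)\ge$ a truncation at level $n_u$ combined with the scaling argument above and reverse Hölder (inequality~\eqref{holder_upper}).

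\emph{Main obstacle.} I expect the hardest step to be making the rescaling rigorous, that is, showing that the tail $\prod_{i>n_u}\Lambda^{(i)}$ on $B_u$ contributes a factor bounded above and below independently of $u$. The fields are not exactly self-similar; only the moment bounds $\rho$ and $\upupsilon$ scale. I would handle this through the mixed-moment estimates, running the proof of Theorem~\ref{th2_mult} with $\theta_1$ restricted to $[0,2u]$. After the change of variables $\theta_1=u t$, the integral $\int_0^{2u}\exp(\int_{\log_b\theta}^{\infty}\ln\rho)\,\theta^{d-1}d\theta$ is seen to scale like $u^{d-\log_b\rho(0)}$ times a bounded constant.
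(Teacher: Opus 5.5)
Your lower bound for $q\in[1,p]$ (Theorem~\ref{th3_mult} with $\sigma_q=m_q$) and your rescaling of the tail levels $i>n_u$ through Lemma~\ref{lemma_asymp} and Theorem~\ref{th2_mult} match the paper. The upper bound for $q\in[1,p]$, however, has a genuine gap: it lies in how you treat the levels $i\le n_u$ and how you recombine them with the tail.

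Assumption~\ref{assump2_mult} and \eqref{cond2.2_mult} cannot produce the factor $m_q^{n_u}$. They concern only the one vector $\bm p$ with $\sum_j p_j=p$. For $i\le n_u$ and points of $B_u$, the arguments $b^i\min d$ lie in $[0,2]$, and there all you know is $\rho\le\rho(0,\bm p)$. By contrast, \eqref{cond2.2_mult} concerns arguments $b^i\ge 1$.

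Your last paragraph shows exactly this. The integral $\int_0^{2u}\exp(\int_{\log_b\theta}^{\infty}\ln\rho)\,\theta^{d-1}d\theta$ runs over \emph{all} levels, not the tail, and it yields $E\mu^p(B_u)\lesssim u^{dp-\log_b\rho(0,\bm p)}$. You may arrange $\rho(0,\bm p)=m_p$, since H\"older and Assumption~\ref{assump_isot} give $E\prod_j(\Lambda^{(i)}(\bm\theta_j))^{p_j}\le m_p$. Even so, for $q<p$ you can only interpolate, which gives the exponent $dq-\frac{q}{p}\log_b m_p$. This is strictly weaker than $dq-\log_b m_q$ whenever $r\mapsto\log m_r$ is strictly convex; in the geometric Gaussian case, for instance, $q(p-1)s^2/2>q(q-1)s^2/2$. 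You also never say how $E(\int_{B_u}\Lambda_{n_u}T\,d\sigma)^q$ factors into the two contributions.

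The missing idea is a pointwise H\"older (Jensen) decoupling, which needs no near-constancy. Take $f=\prod_{i\le n_u}\Lambda^{(i)}$ and $g=\prod_{n_u<i\le n}\Lambda^{(i)}$. Then
\[
\Bigl(\int_{B_u} fg\,d\sigma\Bigr)^q\le\Bigl(\int_{B_u} f^q g\,d\sigma\Bigr)\Bigl(\int_{B_u} g\,d\sigma\Bigr)^{q-1},\qquad q\ge1 .
\]
Independence and Assumption~\ref{assump_isot} then give $E\mu_n^q(B_u)\le m_q^{n_u+1}E(\int_{B_u}g\,d\sigma)^q$ exactly. Only at this point is your rescaling applied to $g$, and $E(\cdot)^q\le(E(\cdot)^p)^{q/p}$ covers every $q\in[1,p]$.

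Two smaller points remain.
\begin{itemize}
\item For $q\in(0,1)$, the step ``$\mu_n^{q-1}(B_u)\le(\int\ldots)$'' is not yet an argument. You can complete it with Jensen for the convex map $x\mapsto x^{q-1}$ under the probability measure $\Lambda_{n-1}d\sigma/\mu_{n-1}(B_u)$. This gives $E\mu_n^q(B_u)\le\sup_{\bm\theta,\bm\theta'\in B_u}E[\Lambda^{(n)}(\bm\theta)(\Lambda^{(n)}(\bm\theta'))^{q-1}]\,E\mu_{n-1}^q(B_u)$. That is a valid alternative to the paper's reverse H\"older with $Y=(\mu_{n-1}(B_u)\Lambda^{(n)}(\mathbb{0}_d))^{q-1}$.
\item For the lower bound with $q\in(0,1)$, you need a lower bound on the rescaled tail moment that is uniform in $u$. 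The mixed-moment upper estimates of Theorem~\ref{th2_mult} do not give one. The paper uses Theorem~\ref{th3_mult} for the rescaled fields. Alternatively, the uniform $L_p$ bound together with $E\widetilde\mu_m(B_1)=\sigma(B_1)$ and $EX\le(EX^q)^{\lambda}(EX^p)^{1-\lambda}$, where $\lambda=\frac{p-1}{p-q}$, suffices.
\end{itemize}
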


\begin{proof} Without loss of generality, let us put $\bm{\theta} = \mathbb{0}_d.$ Let us find the upper bound for $E\mu_n^q(B_u(\mathbb{0}_d)),$ when $q\geq1.$ Let $n_u=[-\log_bu]$ be the largest integer such that $n_u\leq-\log_bu.$ Application of H\"older's inequality results in
\[\mu_n^q(B_u(\mathbb{0}_d))= \left( \int\limits_{B_u(\mathbb{0}_d)} \prod_{i=0}^{n} \Lambda^{(i)}(\bm{\theta})\sigma(d\bm{\theta}) \right)^q \]
\[\leq \left( \int\limits_{B_u(\mathbb{0}_d)} \left( \prod_{i=0}^{n_{\textit{\textbf{u}}}} \Lambda^{(i)}(\bm{\theta})\right)^q \prod_{i = n_{\textit{\textbf{u}}}+1}^{n}\Lambda^{(i)}(\bm{\theta})  \sigma(d\bm{\theta})\right) \left(\int\limits_{B_u(\mathbb{0}_d)} \prod_{i = n_{\mathfrak{u}}+1}^{n}\Lambda^{(i)}(\bm{\theta})  \sigma(d\bm{\theta})\right)^{q/p}.\]
By taking expectations from both sides, one obtains that $E\mu_n^q(B_u(\mathbb{0}_d))$ is bounded by \[ \int\limits_{B_u(\mathbb{0}_d)}\prod_{i=0}^{n_{u}}E (\Lambda^{(i)}(\bm{\theta}))^q  E\left( \left(\prod_{i = n_{u}+1}^{n} \Lambda^{(i)}(\bm{\theta} ) \right) \left( \int\limits_{B_u(\mathbb{0}_d)} \prod_{i = n_{u}+1}^{n} \Lambda^{(i)}(\bm{\theta}' ) \sigma(d\bm{\theta}')\right)^{q/p}  \right) \sigma(d\bm{\theta}).\] Therefore, by Assumption \ref{assump_isot} it holds
\[ E\mu_n^q(B_u(\mathbb{0}_d)) \leq (E(\Lambda^{(0)}(\mathbb{0}_d))^q)^{n_u+1}E\left( \int\limits_{B_u(\mathbb{0}_d)} \prod_{i = n_{u}+1}^{n} \Lambda^{(i)}(\bm{\theta}) \sigma(d\bm{\theta})\right)^{1+q/p} \]
\begin{equation}\label{lemma3_upper} =  (E(\Lambda^{(0)}(\mathbb{0}_d))^q)^{n_u+1} u^{dq} E\left(u^{-d} \int\limits_{B_u(\mathbb{0}_d)} \prod_{i = n_{u}+1}^{n} \Lambda^{(i)}(\bm{\theta}) \sigma(d\bm{\theta}) \right)^q.\end{equation} If the last expectation in \eqref{lemma3_upper} is bounded when $u\to0,$ then, one gets 
\begin{equation}\label{lemma_mom_est1}
E\mu_n^q(B_u(\mathbb{0}_d)) \leq C u^{dq} (E(\Lambda^{(0)}(\mathbb{0}_d))^q)^{n_u+1} \leq Cu^{dq-\log_bE(\Lambda^{(0)}(\mathbb{0}_d))^q}, \ {\rm when} \ u\to0.    
\end{equation} Since the estimate \eqref{lemma_mom_est1} is uniform in $n$, it holds true for $E\mu^q(B_{u}(\mathbb{0}_d))$ as well.

Let us show that the last expectation in \eqref{lemma3_upper} is bounded when $u\to0$. By the change of variables $\theta_1' = \theta_1 u$, one~gets
\[ u^{-d} \int\limits_{B_u(\mathbb{0}_d)}\prod\limits_{i=n_u}^n\Lambda^{(i)}(\theta_1',\theta_2,...,\theta_d)\sigma(d\theta_1'd\theta_2\cdot...\cdot d\theta_d)\] 
\begin{equation*}=u^{-d+1} \int\limits_0^{2\pi}\int\limits_{[0,\pi]^{d-2}}\int\limits_0^{1}\sin^{d-1}(u\theta_1)\sin^{d-2}\theta_2\cdot...\cdot\sin\theta_{d-1}\prod\limits_{i=n_u}^n\Lambda^{(i)}(u\theta_1,\theta_2,...,\theta_d) \prod\limits_{j=1}^d d\theta_j.\end{equation*}
As $\sin(u\theta_1)\leq \frac{\pi}{2}u\sin\theta_1, \ \theta_1 \in[0,\frac{\pi}{2}],\ u\in[0,1],$ one gets that the above is bounded by
\begin{equation*}
C\int\limits_0^{2\pi}\int\limits_{[0,\pi]^{d-2}}\int\limits_0^{1}\sin^{d-1}\theta_1\sin^{d-2}\theta_2\cdot...\cdot\sin\theta_{d-1}\prod\limits_{i=n_u}^n\Lambda^{(i)}(u\theta_1,\theta_2,...,\theta_d)\prod\limits_{j=1}^d d\theta_j\end{equation*}
\begin{equation}\label{lemma3:lambdatilde}=C\int\limits_{B_1(\mathbb{0}_d)} \prod\limits_{i=0}^{n-n_u} \widetilde{\Lambda}^{(i)}(\bm{\theta})\sigma(d\bm{\theta}):=C\widetilde{\mu}_{n-n_u}(B_1(\mathbb{0}_d)),\end{equation} where $\widetilde{\Lambda}^{(i)}(\bm{\theta}): = \Lambda^{(i+n_u)}(u\theta_1,\theta_2,...,\theta_d),\ \bm{\theta} =(\theta_1,\theta_2,..., \theta_d) \in\mathbb{S}^d, \ i\in\mathbb{N}_0.$ 

Let us show that the random fields $\widetilde{\Lambda}^{(i)}(\cdot), \ i\in\mathbb{N}_0,$ satisfy conditions of Theorem \ref{th2_mult}. Indeed, as random fields $\Lambda^{(i)}(\cdot), \ i\in\mathbb{N}_0,$ satisfy Assumption \ref{assump2_mult} and conditions of Theorem \ref{th2_mult}, by Lemma  \ref{lemma_asymp} and as the function $\rho(\cdot, \textit{\textbf{p}}),$ $\textit{\textbf{p}} = (p_1,p_2,...,p_k),\ p_j\geq0, \ j=\overline{1,k}, \ k\geq2, \ \sum p_j = p,$ is non-increasing, it~holds 
\[ E\left(\prod_{j=1}^k (\widetilde\Lambda^{(i)}(\bm{\theta}_j))^{p_j}\right) =  E\left(\prod_{j=1}^k (\Lambda^{(i+n_u)}(u\theta_1^{(j)},\theta_2^{(j)},...,\theta_d^{(j)}))^{p_j}\right)\]
\[ \leq \rho(b^{i+n_u}\min\limits_{m\neq h}d_{\mathbb{S}^d}((u\theta_1^{(m)},\theta_2^{(m)},...,\theta_d^{(m)}),(u\theta_1^{(h)},\theta_2^{(h)},...,\theta_d^{(h)})),\textit{\textbf{p}}) \]\[\leq \rho(Cb^{i}\min\limits_{m\neq h}d_{\mathbb{S}^d}(\bm{\theta}_m,\bm{\theta}_h),\textit{\textbf{p}}),\] for all $i\in\mathbb{N}_0,$ $\bm{\theta}_j \in B_1(\mathbb{0}_d),\ j = \overline{1,k},$ and some $C>0.$ Thus, random fields $\widetilde{\Lambda}^{(i)}(\cdot), \ i\in\mathbb{N}_0,$ satisfy Assumption \ref{assump2_mult} and conditions of Theorem \ref{th2_mult} such that the moments $E\widetilde{\mu}_{n-n_u}^p(B_1(\mathbb{0}_d)),$ $n\geq n_u,$ are uniformly bounded and the last expectation in \eqref{lemma3_upper} is bounded when $u\to0.$

Now, let us estimate $E\mu^q(B_u(\mathbb{0}_d))$ from above for $q\in(0,1).$ Application of the reverse H\"{o}lder's inequality $(EX^q)^{1/q} (EY^p)^{1/p}\leq E(XY), \ \frac{1}{q}+\frac{1}{p}=1, \ q\in(0,1), \ p<0,$ results in
\[ EX^q \leq \left( \frac{E(XY)}{(EY^p)^{1/p}}\right)^q = (E(XY))^{q} (EY^p)^{1-q}.\] By setting $X=\mu_{n}(B_u(\mathbb{0}_d))$ and $Y=(\mu_{n-1}(B_u(\mathbb{0}_d))(\Lambda^{(n)}(\mathbb{0}_d)))^{q-1},$ one obtains
\begin{equation}\label{lemma3:q01_upper} E\mu_{n}^q(B_u(\mathbb{0}_d)) \leq \left( E\left( \frac{\mu_{n}(B_u(\mathbb{0}_d))(\Lambda^{(n)}(\mathbb{0}_d))^{q-1}}{\mu_{n-1}^{1-q}(B_u(\mathbb{0}_d))} \right) \right)^q\left( E\left(\mu_{n-1}(B_u(\mathbb{0}_d)) (\Lambda^{(n)}(\mathbb{0}_d))\right)^{(q-1)p} \right)^{1-q}.\end{equation} For the first expectation in \eqref{lemma3:q01_upper}, it holds
\[E\left(  \frac{\mu_{n}(B_u(\mathbb{0}_d))(\Lambda^{(n)}(\mathbb{0}_d))^{q-1}}{\mu_{n-1}^{1-q}(B_u(\mathbb{0}_d))} \right) = E\left( \frac{\displaystyle\int_{B_u(\mathbb{0}_d)} \Lambda_{n-1}(\bm{\theta})\Lambda^{(n)}(\bm{\theta})(\Lambda^{(n)}(\mathbb{0}_d))^{q-1}\sigma(d\bm{\theta})}{\mu_{n-1}^{1-q}(B_u(\mathbb{0}_d))} \right)\]
\[ \leq E  \mu^q_{n-1}(B_u(\mathbb{0}_d)) \max_{\bm{\theta} \in B_u(\mathbb{0}_d) } E(\Lambda^{(n)}(\bm{\theta})(\Lambda^{(n)}(\mathbb{0}_d))^{q-1}) \leq E \mu^q_{n-1}(B_u(\mathbb{0}_d)) \rho(b^{n}u,\textit{\textbf{q}})\] \[\leq E  \mu^q_{n-1}(B_u(\mathbb{0}_d)) \rho(b^{n-n_t },\textit{\textbf{q}}),\] where the last estimate holds as $\rho(\cdot,\textit{\textbf{q}})$ does not decrease for $\textit{\textbf{q}}=(q-1,1), \ q\in(0,1).$ 

Thus, from \eqref{lemma3:q01_upper}, the above estimate and as $q=(q-1)p,$ it follows
\[ E\mu_{n}^q(B_u(\mathbb{0}_d)) \leq (E\mu^q_{n-1}(B_u(\mathbb{0}_d)) \rho( b^{n-n_t },\textit{\textbf{q}}))^{q} \left( E\left(\mu_{n-1}(B_u(\mathbb{0}_d)) (\Lambda^{(n)}(\mathbb{0}_d))\right)^{q} \right)^{1-q} \]
\[ = E\mu_{n-1}^q(B_u(\mathbb{0}_d)) E(\Lambda^{(0)} (\mathbb{0}_d))^q\left( \frac{\rho(b^{n-n_u},\textit{\textbf{q}})}{E(\Lambda^{(n)} (\mathbb{0}_d))^q} \right)^q,\] where we applied Assumption \ref{assump_isot} in the above expression. The recursive application of the above estimate results in
\begin{equation}\label{lemma3:est_below_q_small} E\mu_{n_u-1}^q(B_u(\mathbb{0}_d)) \leq  E\mu_{1}^q(B_u(\mathbb{0}_d)) (E(\Lambda^{(0)}(\mathbb{0}_d))^q)^{n_u-2}\prod\limits_{i=2}^{n_u-1} \left( \frac{\rho(b^{i-n_u},\textit{\textbf{q}})}{E(\Lambda^{(i-1)}(\mathbb{0}_d))^q} \right)^q.\end{equation}  As for 
$q\in(0,1)$ it holds $E\mu_{1}^q(B_u(\mathbb{0}_d))\leq(E\mu_{1}(B_u(\mathbb{0}_d))^q\sim Cu^{dq},\ u\to0,$ and as

\[(E(\Lambda^{(0)}(\mathbb{0}_d))^q)^{n_u} \leq C(E(\Lambda^{(0)}(\mathbb{0}_d))^q)^{-\log_bu} = Cu^{-\log_bE(\Lambda^{(0)}(\mathbb{0}_d))^q},\] it follows from \eqref{lemma3:est_below_q_small} 
\[ E\mu^q_{n_u-1}(B_u(\mathbb{0}_d)) \leq C u^{dq-\log_bE\left(\Lambda^{(0)}(\mathbb{0}_d)\right)^q} \prod\limits_{i=1}^{n_u-2} \left( \frac{\rho(b^{-i},\textit{\textbf{q}})}{E\left(\Lambda^{(i)} (\mathbb{0}_d)\right)^q} \right)^q,\ {\rm when} \ u\to0.\] By \eqref{lemma:mom_estim_cond} the above product is bounded. Thus, one gets

\[ E\mu_{n_u-1}^q(B_u(\mathbb{0}_d)) \leq C u^{dq-\log_bE\left(\Lambda^{(0)}(\mathbb{0})\right)^q}.\] As for $q\in(0,1)$ the sequence of random variables $\{\mu_n^q(B_u(\mathbb{0}_d)), \  n \in \mathbb{N}_0\}$ is a supermartingale, it holds $E\mu^q(B_u(\mathbb{0}_d)) \leq E\mu_{n_u-1}^q(B_u(\mathbb{0}_d)),$ from which follows the required estimate.

Now, let us estimate $E\mu^q(B_u(\mathbb{0}_d))$ from below for $q\in(0,1).$ By applying the  H\"{o}lder's inequality \eqref{holder_upper} with $\widetilde{p}=q,$ one obtains
\[\mu_{n}^q(B_u(\mathbb{0}_d)) = \left( \int\limits_{B_u(\mathbb{0}_d)} \left( \prod_{i=0}^{n_{\textit{\textbf{u}}}-1} \Lambda^{(i)}(\bm{\theta}) \left( \prod_{i=n_{\textit{\textbf{u}}}}^{n} \Lambda^{(i)}(\bm{\theta} )\right)^{1/q}\right)\left( \prod_{i=n_{\textit{\textbf{u}}}}^{n} \Lambda^{(i)}(\bm{\theta} )\right)^{1/p}\sigma(d\bm{\theta}) \right)^q \]
\[\geq \left( \int\limits_{B_u(\mathbb{0}_d)} \left( \prod_{i=0}^{n_{\textit{\textbf{u}}}-1} (\Lambda^{(i)}(\bm{\theta}))\right)^q \prod_{i = n_{\textit{\textbf{u}}}}^{n}\Lambda^{(i)}(\bm{\theta})  \sigma(d\bm{\theta})\right) \left(\int\limits_{B_u(\mathbb{0}_d)} \prod_{i = n_{\mathfrak{u}}}^{n}\Lambda^{(i)}(\bm{\theta} )  \sigma(d\bm{\theta})\right)^{q/p}.\]
Thus, by Assumption \ref{assump_isot}, one gets \begin{equation}\label{lemma_mom_est_below} E\mu_{n}^q(B_u(\mathbb{0}_d)) \geq  (E(\Lambda^{(0)}(\mathbb{0}_d))^q)^{n_u} u^{dq} E\left( u^{-d} \int\limits_{B_u(\mathbb{0}_d)}\prod\limits_{i=n_u}^n\Lambda^{(i)}(\bm{\theta})\sigma(d\bm{\theta})\right)^q. \end{equation} 
If the last expectation in \eqref{lemma_mom_est_below} is bounded from below by a positive constant when $u\to0,$ then, it holds 
\begin{equation}\label{lemma3:last_est} E \mu_{n}^q(B_u(\mathbb{0}_d)) \geq  C(E(\Lambda^{(0)}(\mathbb{0}_d))^q)^{n_u} u^{dq} \geq C u^{dq-\log_bE\left(\Lambda^{(0)}(\mathbb{0}_d)\right)^q}.\end{equation} Since the estimate \eqref{lemma3:last_est} is uniform in $n$, it holds true for $E\mu^q(B_{u}(\mathbb{0}_d))$ as well.

Let us consider the last expectation in \eqref{lemma_mom_est_below}. By following the steps leading to \eqref{lemma3:lambdatilde} and applying $\sin(u\theta_1)\geq \frac{2}{\pi}u\sin\theta_1,\ \theta_1\in[0,1], \ u\in[0,1],$ one obtains
\begin{equation}\label{lemma3:last_measure} u^{-d} \int\limits_{B_u(\mathbb{0}_d)}\prod\limits_{i=n_u}^n\Lambda^{(i)}(\bm{\theta})\sigma(d\bm{\theta}) \geq C\int\limits_{B_1(\mathbb{0}_d)} \prod\limits_{i=0}^{n-n_u} \widetilde{\Lambda}^{(i)}(\bm{\theta})\sigma(d\bm{\theta}):=C\widetilde{\mu}_{n-n_u}(B_1(\mathbb{0}_d)).\end{equation} Let us show that the random fields $\widetilde{\Lambda}^{(i)}(\cdot), \ i\in\mathbb{N}_0,$ satisfy conditions of Theorem \ref{th3_mult}. As the random fields $\Lambda^{(i)}, \ i=0,1,...,$ satisfy Assumption~\ref{assump3_mult}, it holds
\[ \min\limits_{\substack{\bm{\theta}_j\in B_x(\mathbb{0}_d) \\ j=\overline{1,k}}}E\bigg(\prod_{j=1}^k (\widetilde{\Lambda}^{(i)}(\bm{\theta}_j))^{p_j}\bigg) = \min\limits_{\substack{\bm{\theta}_j\in B_{ux}(\mathbb{0}_d) \\ j=\overline{1,k}}}E\bigg(\prod_{j=1}^k ({\Lambda}^{(i+n_u)}(\bm{\theta}_j))^{p_j}\bigg) \] \[ \geq \upupsilon(b^{i+n_u}ux, \textbf{\textit{p}}) \geq \upupsilon( b^{i}x, \textbf{\textit{p}}),\] for all $i\in\mathbb{N}_0, \ x\in[0,\pi].$ Thus, the random fields $\widetilde{\Lambda}^{(i)}(\cdot), \ i\in\mathbb{N}_0,$ satisfy Assumption \ref{assump3_mult}, and conditions of Theorem \ref{th3_mult} such that the limiting measure $\widetilde{\mu}(\cdot)$ is non-degenerate, i.e. $E\widetilde{\mu}^q(B_1(\mathbb{0}_d))>0.$ As for $q\in(0,1)$ the sequence of random variables $\{\widetilde{\mu}_n^q(B_1(\mathbb{0}_d)), \  n \in \mathbb{N}_0\}$ is a supermartingale, it holds $E\widetilde{\mu}^q(B_1(\mathbb{0}_d)) \leq E\widetilde{\mu}_{n-n_u}^q(B_1(\mathbb{0}_d)), n\geq n_u,$ such that by \eqref{lemma3:last_measure} the last expectation in \eqref{lemma_mom_est_below} is bounded from below by a positive constant when $u\to0.$

Finally, the lower bound for $q\geq1$ follows from Theorem~\ref{th3_mult} and Assumption \ref{assump_isot}. \end{proof}

The previous Lemma implies the following result.

\begin{theorem}\label{th_renyi}
Let the conditions of Lemma {\rm \ref{lemma:mom_estim}} hold true for each  $q\in[0,p],\ p\geq2.$  Then, the R\'enyi function of the limiting measure $\mu(\cdot)$ is
\[ \tau_{\mu}(q) = q-1-\frac{1}{d}\log_bE\left(\Lambda^{(0)}(\mathbb{0}_d)\right)^q, \ q\in[0,p].\]
\end{theorem}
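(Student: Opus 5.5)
The plan is to plug the two-sided moment bounds of Lemma \ref{lemma:mom_estim} into the definition of $\tau_\mu(q)$. Fix $q\in[0,p]$ and write $\kappa_q:=dq-\log_bE(\Lambda^{(0)}(\mathbb{0}_d))^q$. For the $l$-level dyadic mesh $\Delta_l$ of $\mathbb{S}^d$ I will use the standard properties of dyadic decompositions of metric spaces \cite{hytonen2012non, kaenmaki2012existence}, adapted to the scaling parameter $b$.

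\textbf{Step 1: geometry of the mesh.} There are constants $0<c_1<c_2$ such that every cell $\Delta_l^{(m)}$ satisfies
\[ B_{c_1b^{-l}}(\bm{\theta}_m)\subseteq\Delta_l^{(m)}\subseteq B_{c_2b^{-l}}(\bm{\theta}_m) \]
for some centre $\bm{\theta}_m$. Consequently $|\Delta_l^{(m)}|\asymp b^{-ld}$ by Lemma \ref{lemma_as}, and the number of cells is $N_l\asymp b^{ld}$. Thus $\log|\Delta_l^{(0)}| = -ld\ln b+O(1)$.

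\textbf{Step 2: moment bounds for each cell.} Since $\mu$ is a nonnegative measure, monotonicity gives
\[ E\mu^q(B_{c_1b^{-l}}(\bm{\theta}_m))\le E\mu^q(\Delta_l^{(m)})\le E\mu^q(B_{c_2b^{-l}}(\bm{\theta}_m)). \]
Lemma \ref{lemma:mom_estim} holds for all centres $\bm{\theta}$. By Assumption \ref{assump_isot} and the reduction to $\mathbb{0}_d$ made in its proof, the constants $C_1,C_2,u_0$ do not depend on the centre. Hence, for $l$ large,
\[ C_1' b^{-l\kappa_q}\le E\mu^q(\Delta_l^{(m)})\le C_2' b^{-l\kappa_q}. \]
Summing over the $N_l$ cells gives
\[ \sum_m E\mu^q(\Delta_l^{(m)}) \asymp b^{l(d-\kappa_q)}. \]
For $q=0$ the sum is just $N_l$, consistent with the same formula.

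\textbf{Step 3: computing the limit.} Taking logarithms,
\[ \frac{\log\sum_m E\mu^q(\Delta_l^{(m)})}{\log|\Delta_l^{(0)}|}=\frac{l(d-\kappa_q)\ln b+O(1)}{-ld\ln b+O(1)}\to\frac{\kappa_q-d}{d}=q-1-\frac1d\log_bE(\Lambda^{(0)}(\mathbb{0}_d))^q. \]
Because this is a genuine limit, the $\liminf$ in the definition of $\tau_\mu$ equals it.

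\textbf{Main obstacle.} The hard step is Step 1 together with the uniformity in Step 2. The mesh must be comparable to caps of radius $\asymp b^{-l}$ with uniform constants. If the mesh is dyadic in the usual sense, with radii $\asymp2^{-l}$, the same computation applies with $u=2^{-l}$. In that case one gets $\log_b u^{\kappa_q}$ over $\log_b u^d$, and the ratio is unchanged. So the only real requirement is that the cell diameters are comparable to the inner radii and to $|\Delta_l^{(0)}|^{1/d}$ uniformly in $l$ and $m$. I would state this as the standing property of the dyadic decomposition and check the uniformity of $C_1,C_2$ via strong isotropy.
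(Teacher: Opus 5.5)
Your proposal is correct and follows essentially the same route as the paper: sandwich each dyadic cell between concentric caps (the paper cites Theorem 2.1 of \cite{kaenmaki2012existence} at scale $2^{-l}$), apply the two-sided cap bounds of Lemma~\ref{lemma:mom_estim} with centre-independent constants (by strong isotropy), count the $\asymp 2^{ld}$ cells, and take the ratio of logarithms. As you note, using scale $b^{-l}$ instead changes nothing, and your two-sided argument giving a genuine limit is slightly cleaner than the paper's, which calls the bound obtained from the upper moment estimate an upper bound on $\tau_\mu(q)$, although dividing by the negative quantity $\log|\Delta_l^{(0)}|$ reverses that inequality.
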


\begin{proof}  By \cite[Theorem 2.1]{kaenmaki2012existence} for every set $\Delta_l^{(m)} \in\Delta_l,$ where $\Delta_l$ denotes the mesh formed by $l$-level dyadic decomposition of $\mathbb{S}^d,$ there exists a point $\bm{\theta}_l^{(m)}\in\Delta_l^{(m)},$ and the universal positive constants $c,C$ such that it holds $B_{c2^{-l}}(\bm{\theta}_l^{(m)})\subset \Delta_l^{(m)} \subset B_{C2^{-l}}(\bm{\theta}_l^{(m)}).$  By  the monotonicity of the limiting measure $\mu(\cdot),$ it holds $\mu(B_{c2^{-l}}(\bm{\theta_l^{(m)}}))\leq\mu(\Delta_l^{(m)})\leq\mu(B_{C2^{-l}}(\bm{\theta_l^{(m)}})), \ \Delta_l^{(m)}\in\Delta_l.$ 

Let us first estimate $\tau_{\mu}(\cdot)$ from above. By applying $\mu(\Delta_l^{(m)})\leq\mu(B_{C2^{-l}}(\bm{\theta_l^{(m)}})), \ \Delta_l^{(m)}\in\Delta_l,$ and the upper bound in \eqref{ineq:mom}, one gets
\[ \tau_{\mu}(q) = \liminf\limits_{l\to\infty} \frac{\log\left(\sum\limits_{\Delta_l^{(m)}\in \Delta_l} E \left( \mu(\Delta_l^{(m)})\right)^q\right)}{\log |\Delta_l^{(
0)}|} \leq  \liminf\limits_{l\to\infty} \frac{\log \sum\limits_{\Delta_l^{(m)}\in \Delta_l} C 2^{-l\left(dq-\log_bE\left(\Lambda^{(0)}(\mathbb{0}_d)\right)^q\right)}}{\log 2^{-ld}}\] 
\[\leq \liminf\limits_{l\to\infty}\frac{\log \left(C2^{ld-l\left(dq-\log_bE\left(\Lambda^{(0)}(\mathbb{0}_d)\right)^q\right)}\right)}{\log 2^{-ld}}=q-1-\frac{1}{d}\log_bE\left(\Lambda^{(0)}(\mathbb{0}_d)\right)^q.\] 

As $B_{c2^{-l}}(\bm{\theta}_l^{(m)})\subset \Delta_l^{(m)},$ one obtains the estimate from below analogously. \end{proof}

The next result provides the R\'enyi function in the geometric Gaussian scenario.
\begin{corollary}\label{cor:2} Let the conditions of Corollary {\rm \ref{cor_1}} hold true, i.e. $\mu(B)\in L_p, \ p\geq2,$ for $B\in \mathfrak{B},$ and $\mu(\cdot)$ is non-degenerate. Then, the limiting measure $\mu(\cdot)$ possesses the R\'enyi function
\[ \tau_{\mu}(q) = -\frac{ s^2}{2d\ln b}q^2 +\left(\frac{ s^2}{2d \ln b}+1\right)q-1, \ q\in[0,p].\]
\end{corollary}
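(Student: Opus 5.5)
The plan is to apply Theorem~\ref{th_renyi} directly. The work is to check that the hypotheses of Lemma~\ref{lemma:mom_estim} hold for every $q\in[0,p]$ in the geometric Gaussian scenario, and then to evaluate $E(\Lambda^{(0)}(\mathbb{0}_d))^q$ in closed form.

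The closed-form step is routine. Since $X^{(0)}(1,\mathbb{0}_d)\sim N(0,s^2)$, we have $E(\Lambda^{(0)}(\mathbb{0}_d))^q = e^{q^2s^2/2}e^{-qs^2/2}=\exp\bigl(\tfrac{s^2}{2}(q^2-q)\bigr)$. Hence
\[
\log_b E(\Lambda^{(0)}(\mathbb{0}_d))^q=\frac{s^2(q^2-q)}{2\ln b}.
\]
Substituting this into $\tau_\mu(q)=q-1-\frac1d\log_bE(\Lambda^{(0)}(\mathbb{0}_d))^q$ gives exactly $-\frac{s^2}{2d\ln b}q^2+\bigl(\frac{s^2}{2d\ln b}+1\bigr)q-1$.

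Next I would verify the hypotheses.
\begin{itemize}
\item \emph{Assumption~\ref{assump_isot}.} The fields $X^{(i)}$ are homogeneous and isotropic on $\mathbb{R}^{d+1}$, so their restrictions to the spheres of radius $b^i$ are strongly isotropic. The marginal moments $E(\Lambda^{(i)}(\bm\theta))^q$ equal the value computed above for every $i$ and $\bm\theta$, by the identical distribution of the $X^{(i)}$.
\item \emph{Theorem~\ref{th2_mult}.} This is already supplied by Corollary~\ref{cor_1}.
\item \emph{Theorem~\ref{th3_mult} for $q\in[1,p]$.} I would redo the computation in the proof of Corollary~\ref{cor_1} with $\widetilde{\bm p}=(q/k,\dots,q/k)$, using \eqref{moment_mult} and \eqref{upepsilon}. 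The ratio $E(\Lambda^{(i)})^q/\upupsilon(b^{-i},\widetilde{\bm p})$ then equals $\exp\bigl(\tfrac{q^2(k-1)}{2k}(s^2-r(2b^{-i}))\bigr)$. Its logarithms are summable by \eqref{cond_gau_4}: $r$ is non-increasing and $r(2b^{-i})\ge r(b^{-i+1})$ after an index shift.
\end{itemize}

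The remaining condition is \eqref{lemma:mom_estim_cond} for $q\in(0,1)$ with $\bm q=(q-1,1)$. By \eqref{moment_mult} with $p_1=q-1$ and $p_2=1$, the two-point moment equals $\exp\bigl(\tfrac12(s^2((q-1)^2+1-q)+2(q-1)r(b^i d_{\rm Euc}))\bigr)$. Because $q-1<0$ and $r$ is non-increasing, this is non-decreasing in the distance, as the lemma requires. Using $d_{\rm Euc}\le d_{\mathbb{S}^d}$, one may take $\rho(x,\bm q)=\exp\bigl(\tfrac{s^2}{2}(q-1)(q-2)+(q-1)r(x)\bigr)$. Then
\[
\ln\bigl(\rho(b^{-i},\bm q)/E(\Lambda^{(i)})^q\bigr)=(1-q)\bigl(s^2-r(b^{-i})\bigr),
\]
which is summable by \eqref{cond_gau_4}. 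The endpoint $q=0$ is trivial, since $\tau_\mu(0)=-1$ follows from the fact that there are $\asymp 2^{ld}$ cells.

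I expect the main obstacle to be this $q\in(0,1)$ bookkeeping. The issues are the sign of the exponent $q-1$, the direction in which $\rho(\cdot,\bm q)$ must be monotone, and the precise way $\rho$ is used in the proof of Lemma~\ref{lemma:mom_estim} (the maximum over $\bm\theta\in B_u$). I will also have to make sure that comparing Euclidean and angular distances only moves the argument by a constant factor. Since $r$ is monotone, such a factor can be absorbed by shifting the summation index in \eqref{cond_gau_4}.
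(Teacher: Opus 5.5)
Your proposal is correct and is essentially the paper's proof: check \eqref{lemma:mom_estim_cond} for $q\in(0,1)$ with the same $\rho(\cdot,\bm q)$, which gives $(1-q)(s^2-r(b^{-i}))$; check \eqref{cond:nondeg} for $q\in[1,p]$ via \eqref{upepsilon}; then substitute $E(\Lambda^{(0)}(\mathbb{0}_d))^q=\exp(\frac{s^2}{2}(q^2-q))$ into Theorem~\ref{th_renyi}. Two minor points: the comparison $r(2b^{-i})\ge r(b^{-i+1})$ only holds when $b\ge 2$, so in general shift the index by $\lceil\log_b 2\rceil$ (as your closing remark anticipates); and keeping $k$ general is better than the paper's fixed $k=2$, because the reverse H\"older step in the proof of Theorem~\ref{th3_mult} needs $q/k\in(0,1)$, so you should take an integer $k>q$.
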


\begin{remark}
The above result generalises Theorem $6.1$ in \cite{leonenko2021analysis}, where the authors consider geometric Gaussian scenario with exponentially decaying covariances on the sphere $\mathbb{S}^2$  and calculate the R\'enyi function in the interval~$[1,2].$
\end{remark}

\begin{proof} Let us show that the conditions of Lemma \ref{lemma:mom_estim} are satisfied.  As $d_{\mathbb{S}^d}(\bm{\theta}_m,\bm{\theta}_h)\geq d_{\rm Euc}(\bm{\theta}_m,\bm{\theta}_h), $ $\bm{\theta}_m,\bm{\theta}_h\in\mathbb{S}^d,$ and $r(\cdot)$ is non-increasing, for $\bm{q} = (q-1,1), \ q\in(0,1),$ following the steps leading to \eqref{rho_gaus}, one gets
\[ \rho(b^{-i},\textbf{\textit{q}}) = \exp\left(\frac{1}{2}\left(s^2((q-1)^2-(q-1))+2(q-1)r\left( b^{-i} \right)    \right) \right).\] As $E\left(\Lambda^{(i)}(\mathbb{0}_d)\right)^q = \exp\left( \frac{s^2q(q-1)}{2}\right),$ $i\in\mathbb{N}_0,$ one obtains 
\[ \sum\limits_{i=0}^\infty \ln \left( \frac{\rho(b^{-i}, \textbf{\textit{q}})}{E\left(\Lambda^{(i)}(\mathbb{0}_d)\right)^q} \right) = (1-q)\sum\limits_{i=0}^\infty \left(s^2 - r\left( b^{-i} \right)\right).\] Thus, as $r(\cdot)$ is non-increasing, the condition \eqref{lemma:mom_estim_cond} is satisfied if \eqref{cond_gau_4} holds. 

For $q\geq1$ let us choose a vector $\widetilde{\textbf{\textit{p}}}=(\widetilde{p},\widetilde{p}), \ \widetilde{p} = q/2.$ Then, by \eqref{upepsilon} it holds that
\[ \upupsilon(b^{-i},\widetilde{\textbf{\textit{p}}}) = \exp\left( \frac{1}{2}\left(s^2\left(\frac{q^2}{2} - q\right) + \frac{q^2}{2}r(2b^{-i}) \right) \right)\] and 
\[ \sum\limits_{i=0}^\infty \ln\left( \frac{ E\left(\Lambda^{({i})}(\mathbb{0}_d)\right)^q}{\upupsilon(b^{-i}, \widetilde{\textbf{\textit{p}}})} \right) =  \frac{q^2}{4}\sum\limits_{i=0}^\infty(s^2 - r(2b^{-i})),\] which means that the condition \eqref{cond:nondeg} of Therem \ref{th3_mult} is satisfied if \eqref{cond_gau_4} holds true. Thus, the conditions of Lemma~\ref{lemma:mom_estim} are satisfied and it holds
\[ \tau_{\mu}(q) = q-1-\frac{1}{d}\log_bE\left(\Lambda^{(0)}(\mathbb{0}_d)\right)^q = -\frac{ s^2}{2d \ln b}q^2 +\left(\frac{ s^2}{2d \ln b}+1\right)q-1,  \ q\in[0,p].\]\end{proof}

\section*{Funding} This work was supported by the Australian Research Council's Discovery Projects funding scheme under Grant DP220101680 and by Ripple Impact Fund 2022-247584 (5855).

\section*{Disclosure statement} No potential conflict of interest was reported by the author(s).

\bibliographystyle{abbrv}
\bibliography{mybib}

\section*{Appendix}

\textit{Proof of Lemma {\rm \ref{lemma_asymp}}} By \eqref{dist} and as $\sin(\varphi)\geq \frac{2}{\pi}\varphi, \ \varphi\in[0,\frac{\pi}{2}],$ it holds
\[ d_{\mathbb{S}_d}^2((u\theta_1^{(1)},\theta_2^{(1)},...,\theta_d^{(1)}), (u\theta_1^{(2)},\theta_2^{(2)},...,\theta_d^{(2)})) \]
\begin{equation}\label{eq:lemma_2_eq} \leq \frac{\pi^2}{4}d_{{\rm Euc}}^2((u\theta_1^{(1)},\theta_2^{(1)},...,\theta_d^{(1)}), (u\theta_1^{(2)},\theta_2^{(2)},...,\theta_d^{(2)})).\end{equation} Using \cite[Lemma 2.1]{cheng2016excursion}, one obtains
\[ d_{{\rm Euc}}^2((u\theta_1^{(1)},\theta_2^{(1)},...,\theta_d^{(1)}), (u\theta_1^{(2)},\theta_2^{(2)},...,\theta_d^{(2)}))\] \[ = \left(4\sin^2\left(\frac{u(\theta_{1}^{(1)}-\theta_{1}^{(2)})}{2}\right) +2\sin(u\theta_{1}^{(1)})\sin(u\theta_{1}^{(2)}) ( 1 - \cos( \theta_2^{(1)} - \theta_2^{(2)} )) \right.\] \begin{equation}\label{lemma2:expans}\left.+...+2\sin(u\theta_{1}^{(1)})\sin(u\theta_{1}^{(2)}) \prod_{i=2}^{d-1}\sin(\theta_{i}^{(1)})\sin(\theta_{i}^{(2)}) ( 1 - \cos( \theta_d^{(1)} - \theta_d^{(2)} ) ) \right).\end{equation} By applying $\sin\left( u \varphi \right) \leq \frac{\pi}{2} u \sin (\varphi), \ \varphi\in[0,\frac{\pi}{2}],\ u \geq 0,$ in the above expression, one gets 
\[ d_{\rm Euc}^2((u\theta_1^{(1)},\theta_2^{(1)},...,\theta_d^{(1)}), (u\theta_1^{(2)},\theta_2^{(2)},...,\theta_d^{(2)})) \leq \frac{\pi^2 u^2}{4}d_{\rm Euc}^2(\bm{\theta}_1, \bm{\theta}_2).\] As by \eqref{dist} $d_{\rm Euc}(\bm{\theta}_1,\bm{\theta}_2) \leq d_{\mathbb{S}^d}(\bm{\theta}_1,\bm{\theta}_2), \ \bm{\theta}_1,\bm{\theta}_2\in\mathbb{S}^d,$
from \eqref{eq:lemma_2_eq} and the above one obtains 
\[ d_{\mathbb{S}_d}^2((u\theta_1^{(1)},\theta_2^{(1)},...,\theta_d^{(1)}), (u\theta_1^{(2)},\theta_2^{(2)},...,\theta_d^{(2)})) \leq \frac{\pi^4 u^2}{16} d_{\mathbb{S}_d}^2(\bm{\theta}_1, \bm{\theta}_2).\] The lower bound is obtained analogously using
\[d_{\mathbb{S}_d}((u\theta_1^{(1)},\theta_2^{(1)},...,\theta_d^{(1)}), (u\theta_1^{(2)},\theta_2^{(2)},...,\theta_d^{(2)})) \]\[ \geq d_{\rm Euc}((u\theta_1^{(1)},\theta_2^{(1)},...,\theta_d^{(1)}), (u\theta_1^{(2)},\theta_2^{(2)},...,\theta_d^{(2)})),\] applying \eqref{lemma2:expans} and successively using inequalities $\sin(u\varphi)\geq \frac{2}{\pi}u\sin(\varphi), \ \varphi \in[0,\frac{\pi}{2}],\ u\geq0,$ and $d_{\rm Euc}(\bm{\theta}_1,\bm{\theta}_2) \geq \frac{2}{\pi}d_{\mathbb{S}^d}(\bm{\theta}_1,\bm{\theta}_2),$ $\bm{\theta}_1,\bm{\theta}_2\in\mathbb{S}^d.$  \hfill{} \qedsymbol

\end{document}